\newcommand{\Jimm}{\mathbf J}
\newcommand{\Q}{\mathbb Q}
\newcommand{\Real}{\mathbb R}
\newcommand{\Z}{\mathbb Z}
\newcommand{\C}{\mathbb C}
\newcommand{\stab}{\operatorname{Stab}}
\newcommand{\psl}{ \mathsf{PSL}_2 (\Z)    }
\newcommand{\pgl}{ \mathsf{PGL}_2 (\Z)   }
\newcommand{\pglc}{ \mathsf{PGL}_2 (\C)  } 
\newcommand{\pslr}{ \mathsf{PSL}_2 (\Real)   }
\newtheorem{theorem}{Theorem}[section]
\newtheorem{lem}[theorem]{Lemma}
\newtheorem{proposition}[theorem]{Proposition}
{ \theoremstyle{remark}\newtheorem*{remark}{Remark} }
\theoremstyle{remark}
\newtheorem{example}{{\rm\textbf{Example}}}
\title{Equivariant Modular Functions and Quantizations of Continued Fractions}
\author[Mustafa Topkara]{Mustafa~Topkara}
\address{Department of Mathematics, Mimar Sinan University, Istanbul, Türkiye}
\email{m.e.topkara@gmail.com}
\author[A. Muhammed Uludağ]{A.~Muhammed~Uludağ}
\address{Department of Mathematics, Galatasaray University, Istanbul, Türkiye}
\email{muludag@gsu.edu.tr}
\begin{document}

\maketitle

\section{Introduction}
\subsection{Equivariant functions}
Suppose that a group $G$ acts on the sets $X$ and $Y$ from the left.
We say that a function $\psi: X\to Y$ is equivariant with respect to these actions if
$$
\psi(gx)=g \psi(x) \quad (x\in X, \, g\in G).
$$
If $X$, $Y$ carry additional structures, $G<\mathrm{Aut}(X)$, 
and the $G$-action on $Y$ is defined via a homomorphism $\Psi:G \to  \mathrm{Aut}(Y)$, then the equivariance condition can be reformulated as
\begin{equation}\label{setting}
\psi(gx) = \Psi(g) \psi(x)  \quad (x\in X, \,  g\in G).
\end{equation}
We call the pair $(\Psi, \psi)$ an {\it equivariant pair}.

Observe that, by Condition~(\ref{setting}), if $x$ is fixed by $g$, then $\psi(x)$ is fixed by $\Psi(g)$.

\subsection{Morier-Genoud and Ovsienko quantization}
Let 
\begin{align*}
X:=&{\mathsf P^1}(\Z)=\Bigl\{[m:n] \, |\, m,n \in \Z, \quad  (m,n)\neq (0,0)\Bigr\}, \\
G:=&\mathsf{PSL}_2(\Z),\\
Y:=&{\mathsf P^1}(\Z[q])=\Bigl\{[A:B] \, |\, A, B  \in \Z[q],  \quad (A,B)\neq (0,0)\Bigr\},
\end{align*}
where $\Z[q]$ is the polynomial ring with integral coefficients and $\Z(q)$ is its quotient ring, the field of rational functions with integral (or equivalently $\Q$-) coefficients. Recall that
\begin{align*}
\psl:=\Bigl\{
&M:[m:n] \in \mathsf{P^1}(\Z) \mapsto [am+bn: cm+dn] \in {\mathsf P^1}(\Z)\, |\\
&\,a,b,c,d \in \Z, \quad  ad-bc=1\Bigr\},
\end{align*}
and set
\begin{align*}
\mathsf{PGL}_2(\Z(q)):=
\Bigl\{
&M:[m:n] \in \mathsf{P^1}(\Z[q]) \mapsto [Am+Bn: Cm+Dn]\in \mathsf{P^1}(\Z[q]) \, |\\ 
&\,A,B,C,D \in \Z[q], \quad AD-BC\neq 0\Bigr\}.
\end{align*}

It has been shown in  \cite{Valentin} (see also~\cite{genoud}) that a non-trivial equivariant pair $(\Psi, \psi)$ with 
\begin{align*}
\Psi:&\,\,\psl\to \mathsf{PGL}_2(\Z(q)),\\
\psi:&\,\,\mathsf{P^1}(\Z) \to \mathsf{P^1}(\Z[q]) 
\end{align*}
exists, which furthermore satisfies the extra `{quantization}' condition 
$$
\psi([m:1])=\psi(m)=\frac{1-q^m}{1-q} \quad (m=1,2,\dots).
$$
In particular, this requires $\psi(1)=1$.
The value $\psi([m:n])$ is called the \emph{quantization} of the rational $m/n$
and is denoted $\psi(x):=:[x]_q$. 
The representation $\Psi$ itself, which is faithful, is called the \emph{quantization} of  $\psl$. 

\subsection{Purpose of the paper}
We show that there exist exactly three equivariant pairs 
$(\Psi, \psi)$ with $\Psi:\psl\to\mathsf{PGL}_2(\C(q))$.
One of them is the pair $(\Psi, \psi)$  described above, with the image of $\Psi$ actually lying in 
 $\mathsf{PGL}_2(\Z[q])$.
In addition, there is a pair of conjugate equivariant pairs 
$(\Psi^\pm, \psi^\pm)$ with the image of $\Psi^\pm$ actually lying in 
 $\mathsf{PGL}_2(\Z[\omega][q])$, where $\omega=\exp(2\pi i/6)$.
Both representations $\Psi$  and $\Psi^\pm$ admit a natural and unique extensions to $\pgl$, and the maps
$\psi$  and $\psi^\pm$ are equivariant with respect to the $\psl$-action.

We also discuss some specializations of $q$. 
We show that, when $q=(-3\pm\sqrt5)/2$, 
the representation $\Psi$ is conjugate to Dyer's outer automorphism $\alpha$ of 
$\pgl$ and the quantization map $\psi$  is a translate of the involution 
$\Jimm$ discovered in \cite{conumerator} by a Möbius transformation. 
There is a similar result for the equivariant pairs $(\Psi^\pm, \psi^\pm)$.

\section{Quantization of $\psl$ as an embedding into $\mathsf{PGL}_2(\mathbf{C}(q))$}
Whenever convenient, elements of projective groups will described as linear fractional maps or by projective matrices.
Define the three involutions in $\pgl$
$$
U:=x\mapsto 1/x, \quad V:=x\mapsto -x, \quad K:=x\mapsto 1-x, 
$$
and define the three elements in $\psl$ by
$$
L:=KU: x\mapsto 1-1/x, \quad 
T:=KV: x\mapsto 1+x, \quad S:=UV: x\mapsto -1/x.
$$
The following presentations are well known~\cite{zeytin2}:
\begin{align*}
\pgl&=\langle U, V, K \, |\, U^2=V^2=K^2=(UV)^2=(KU)^3=1 \rangle,\\
&=\langle U, T \, |\, U^2=(UTU^{-2})^2=(UTUT^{-1})^3=1 \rangle,\\
\psl&=\langle S, L \, |\, S^2=L^3=1 \rangle,\\
&=\langle S, T \, |\, S^2=(TS)^3=1 \rangle.
\end{align*}
Observe that
\begin{align*}
\mathsf{PGL}_2(\C(q))=\mathsf{PGL}_2(\C[q]):=
\Bigl\{
&M:[m:n] \in \mathsf{P^1}(\C[q]) \mapsto [Am+Bn: Cm+Dn]\in \mathsf{P^1}(\C[q]) \, \big|\\ 
&\,A,B,C,D \in \C[q], \quad AD-BC\neq 0\Bigr\}.
\end{align*}
Let $(\Psi, \psi)$ be a pair with
	\begin{align*}
			\Psi: \psl &\to \mathsf{PGL}_2(\C(q))\\
		\psi: {\mathsf P^1}(\Z) &\to {\mathsf P^1}(\C[q]),
	\end{align*}
satisfying the equivariance and the quantization conditions:
\begin{alignat*}{2}
    \psi(Mx)&=\Psi(M)(\psi(x)),\qquad  && \forall M\in \psl, \, \forall  x\in  {\mathsf P^1}(\Z); \\
    \psi(1+m) &= 1+q\psi(m),     && \forall  m\in {\mathsf P^1}(\Z).
\end{alignat*}
Denote 
$$
\Psi(T)=:\mathcal T, \quad \Psi(S)=:\mathcal S, \quad \Psi(L)=:\mathcal L, \, \mbox{etc}.
$$ 
We observe that for any $m\in\Z$,
$$
\mathcal{T}(\psi(m))=\Psi(T)(m)=\psi(T(m))=\psi(1+m)=1+q\psi(m).
$$
Therefore $\mathcal{T}(x)=1+qx$. 
In order to determine $\Psi$, we are now looking for $\mathcal{S}=\Psi(S)$ such that 
$(\mathcal{T}\mathcal{S})^3=1$.

Let $\mathcal{T}$ be the projective matrix
$$
\left[\begin{matrix}
	q
	&
	1
	\\ 
	0
	& 
	1
\end{matrix}\right].
$$
\begin{theorem}
There exist exactly three representations $\Psi: \psl \to \mathsf{PGL}_2(\C(q))$ with $\Psi(T)=\mathcal{T}$:
\begin{itemize}
\item Morier-Genoud and Ovsienko's representation 
$\Psi: \psl \to \mathsf{PGL}_2(\Z[q,1/q])$ defined by
$$
\Psi(S)=\mathcal{S}=\left[\begin{matrix}
	0
	&
	-1
	\\ 
	q
	& 
	0
\end{matrix}\right],
$$
with an extension to $\pgl$ defined by
$$
\Psi(V)=\mathcal{V}=\left[\begin{matrix}
	q
	&
	1-q
	\\ 
	q-q^2
	& 
	-q
\end{matrix}\right].
$$
\item A pair of conjugate representations $\Psi^\pm: \psl \to \mathsf{PGL}_2(\Z[\omega][q,1/q])$ defined by
$$
\Psi^\pm(S)=\mathcal{S}^\pm=
	\left[\begin{matrix} 
		1
		&
		q^{-1}
		\\ 
		-q+\omega^{\pm 1}
		& 
		-1
	\end{matrix}\right], \quad \omega=\exp\left(\frac{2\pi i}{6}\right),\\
$$
with an extension to $\pgl$ defined by
$$
\Psi^\pm(V)=\mathcal{V}^\pm=
\left[\begin{matrix}
	1 & 
	\frac{1+q^{-1}}{q-\omega^{\pm 1}}
	\\ 
	1-q & 
	-1
\end{matrix}\right].
$$
\end{itemize}
\end{theorem}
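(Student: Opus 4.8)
The plan is to turn the problem into a small system of polynomial equations over $\C(q)$ and solve it. Since $\psl=\langle S,T\mid S^{2}=(TS)^{3}=1\rangle$ and $\Psi(T)=\mathcal T$ is prescribed, a representation is exactly a choice of $\mathcal S:=\Psi(S)\in\PGL_{2}(\C(q))$ with $\mathcal S^{2}=1$ and $(\mathcal T\mathcal S)^{3}=1$, together with a compatible equivariant $\psi$. First I would record the two elementary facts needed: an element of $\PGL_{2}$ is an involution exactly when a representing matrix has vanishing trace, so $\mathcal S=\left[\begin{smallmatrix}a&b\\ c&-a\end{smallmatrix}\right]$ with $a^{2}+bc\neq0$; and, by Cayley--Hamilton, a non-scalar element of $\PGL_{2}$ has order dividing $3$ exactly when $(\operatorname{tr})^{2}=\det$. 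The order-$1$ possibility is excluded here, since $\mathcal S=\mathcal T^{-1}$ would force $\operatorname{tr}\mathcal S\neq0$. Hence $(\mathcal T\mathcal S)^{3}=1$ becomes the single homogeneous quadratic relation $\operatorname{tr}(\mathcal T\mathcal S)^{2}=\det(\mathcal T\mathcal S)$, i.e.\ $E(a,b,c)=0$ for an explicit conic $E$ in which $b$ occurs linearly; this alone leaves an a priori one-parameter family of $\mathcal S$.

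What cuts it down is the requirement that a non-trivial equivariant $\psi\colon\mathsf P^{1}(\Z)\to\mathsf P^{1}(\C[q])$ satisfying $\psi(1+m)=1+q\psi(m)$ exist. Because $\psl$ acts transitively on $\mathsf P^{1}(\Z)$ with cusp stabilisers the parabolic cyclic subgroups, $\psi$ is an orbit map determined by its value at $\infty$, which well-definedness forces to lie in $\operatorname{Fix}(\mathcal T)$; the further requirements on the pair then translate into additional polynomial relations among $a,b,c$. Combining these with $E(a,b,c)=0$ gives an over-determined system; I would eliminate $b$, reduce to a low-degree equation in the remaining variable, and then discard its spurious roots, namely those with $\det\mathcal S=0$ (equivalently $\operatorname{tr}(\mathcal T\mathcal S)=0$): these are the two points where the conic meets the line $(q-1)a+c=0$, and there $\mathcal S$ is not invertible. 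Verifying that exactly three honest solutions survive is the heart of the matter, and that bookkeeping --- controlling denominators, and making sure no solution is gained or lost when the degenerate roots are deleted --- is the step I expect to be the main obstacle. The survivors are the rational point $\mathcal S_{0}=\left[\begin{smallmatrix}0&-1\\ q&0\end{smallmatrix}\right]$ and the Galois-conjugate pair $\mathcal S^{\pm}=\left[\begin{smallmatrix}1&q^{-1}\\ \omega^{\pm1}-q&-1\end{smallmatrix}\right]$ over $\Z[\omega][q,q^{-1}]$.

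Finally I would verify these three and build the extensions. For each $\mathcal S$ one checks $\operatorname{tr}\mathcal S=0$ and, applying $M^{3}=(\operatorname{tr}M)M^{2}-(\det M)M$ with $M=\mathcal T\mathcal S$, that $(\mathcal T\mathcal S)^{3}$ is scalar, so $\Psi$ is a genuine homomorphism of $\psl$; the associated $\psi$ is then written out (for $\mathcal S_{0}$ it is the Morier-Genoud--Ovsienko quantization $m\mapsto(1-q^{m})/(1-q)$, whose values already lie in $\mathsf P^{1}(\Z[q])$). For the extension to $\pgl$ I would use that $\psl\triangleleft\pgl$ with quotient of order two and that conjugation by $V$ induces the automorphism $S\mapsto S$, $T\mapsto T^{-1}$; thus an extension is exactly an involution $\mathcal V=\Psi(V)$ with $\mathcal V\mathcal S\mathcal V^{-1}=\mathcal S$ and $\mathcal V\mathcal T\mathcal V^{-1}=\mathcal T^{-1}$. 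Any such $\mathcal V$ must interchange the two fixed points of $\mathcal T$ and commute with $\mathcal S$, and these two conditions determine it; computing it produces the matrices $\mathcal V$ and $\mathcal V^{\pm}$ in the statement, after which it only remains to confirm $\mathcal V^{2}=1$ and the two conjugation identities --- equivalently, that the relations $U^{2}=V^{2}=K^{2}=(UV)^{2}=(KU)^{3}=1$ of $\pgl$ hold on the images.
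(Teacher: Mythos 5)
Your setup is sound, and your count at the first stage is sharper than you may realize: with $\mathcal S=\bigl[\begin{smallmatrix}a&b\\ c&-a\end{smallmatrix}\bigr]$, the relation $(\mathcal T\mathcal S)^3=1$ is indeed the single conic $\operatorname{tr}(\mathcal T\mathcal S)^2=\det(\mathcal T\mathcal S)$, i.e.\ $\bigl((q-1)a+c\bigr)^2+q(a^2+bc)=0$, which has a one-parameter family of non-degenerate solutions in $\P^2(\C(q))$. The gap is in what you claim cuts this curve down to three points. The paper's own proof never invokes $\psi$: it is a direct matrix computation from $\mathcal S^2=1$ and $(\mathcal T\mathcal S)^3=1$ alone, split into the cases $a=0$ and $a\neq 0$ (your treatment of the extension to $\pgl$, via $\mathcal V^2=1$, $\mathcal V\mathcal T\mathcal V^{-1}=\mathcal T^{-1}$, $\mathcal V\mathcal S\mathcal V^{-1}=\mathcal S$, does match the paper's use of $\mathcal V^2=(\mathcal T\mathcal V)^2=(\mathcal S\mathcal V)^2=1$). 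The constraint you propose instead --- existence of an equivariant $\psi$ with $\psi(1+m)=1+q\psi(m)$ --- is vacuous: by the stabilizer criterion (the paper's Lemma~3.1), such a $\psi$ exists for \emph{every} $\Psi$ with $\Psi(T)=\mathcal T$, because $\stab_{\psl}(\infty)=\langle T\rangle$ and $\mathcal T$ fixes both $\infty$ and $1/(1-q)$ in $\P^1(\C(q))$; the relation $\psi(1+m)=1+q\psi(m)$ is then automatic from $T$-equivariance. Concretely, $\mathcal S=\bigl[\begin{smallmatrix}q-1&1\\ -(q^2-q+1)&1-q\end{smallmatrix}\bigr]$ lies on the conic, has determinant $q\neq0$, and (choosing $\psi(\infty)=1/(1-q)$) even yields $\psi(m)=(1-q^m)/(1-q)$ for all $m\geq 0$, yet it is none of the three listed matrices; your sieve would not eliminate it. So the ``additional polynomial relations among $a,b,c$'' never materialize, and the step you yourself defer as the heart of the matter is exactly the content of the theorem.

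You should also be aware that your (correct) one-parameter count is in genuine tension with the paper's Case~II, and this is worth checking independently rather than taking on faith. There the displayed system for $(\mathcal T\mathcal S)^3=1$ is claimed to reduce to $u^2-u+1=0$ and $v=0$; but its first equation, $u^3+v(2u-1)=(u-1)v-v-1$, factors as $(u+1)(u^2-u+1+v)=0$ and is therefore implied by the third equation $u^2-u+1+v=0$, so the whole system collapses to that single equation --- one curve of solutions, matching your count, not two isolated points. Whatever hypothesis actually isolates the three representations in the statement, it must come from somewhere beyond the relations $S^2=(TS)^3=1$, and neither your proposal nor the computation it would reproduce identifies it. I would resolve this before investing in the elimination-theoretic bookkeeping you describe.
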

\begin{proof}
Suppose
$$ 
\Psi(S)=:\mathcal{S}=\frac{Ax+B}{Cx+D} \quad  \bigl(A,B,C,D \in \C[q] \bigr).
$$
The modular group relations $S^2=(TS)^3= 1$ forces
$
\mathcal{S}^2=1,\ (\mathcal{T}\mathcal{S})^3=1.
$
The first relation gives us $A^2=D^2$ and $A+D=0$ or $B=0=C$. If $A=D\neq 0$, we get $B=C=0$ and obtain $\mathcal{S}=I$, which violates the relation $(\mathcal{T}\mathcal{S})^3=1$.
This leaves us two cases:
\begin{itemize}
\item[\textbf{Case I:}]  If $A=D=0$, we can assume that $B=1/C$ where $C$ is an expression such that $C^2\in \mathbb{C}[q]$. Then (changing to matrix notation for convenience)
$$ \mathcal{T}\mathcal{S}
	=
	\left[\begin{matrix}
		q & 1 \\ 0 & 1
	\end{matrix}\right]
	\left[\begin{matrix}
		0 & 1/C \\ C & 0
	\end{matrix}\right]
	=
	\left[\begin{matrix}
		C & q/C \\ C & 0
	\end{matrix}\right]
$$
and thus the second relation becomes
$$
	1= 
	\left[\begin{matrix}
		C & q/C \\ C & 0
	\end{matrix}\right]^3
	=
	\left[\begin{matrix}
		C^3+2qC & qC+q^2/C \\ C^3+qC & qC
	\end{matrix}\right]
$$
which has the only solution $C^2=-q$. This yields the matrix
$$ 
\mathcal{S}=	
\left[\begin{matrix}
	0 & \pm iq^{-1/2} \\ \mp iq^{1/2} & 0
\end{matrix}\right]=	
\left[\begin{matrix}
0 & q^{-1/2} \\ -q^{1/2} & 0
\end{matrix}\right]=	
\left[\begin{matrix}
0 & 1 \\ -q & 0
\end{matrix}\right],
$$
i.e. $\mathcal{S}(x)=-1/(qx)$. We conclude that
$$
\mathcal{S}:=x\mapsto-\frac{1}{qx}, \quad \mathcal{T}:=x\mapsto 1+qx.
$$
This defines the representation $\Psi$ on $\psl$. 
Note that
$$
\Psi(L)=\Psi(TS)=\mathcal T \mathcal S=\mathcal L, 
$$
where $\mathcal L :=x\mapsto1-1/x$ (there is no $q$ involved).
To extend $\Psi$ to $\pgl$, recall that $V(x)=-x$ and let (using projective matrices for convenience)
$$
\Psi(V)=\mathcal{V}=
\left[\begin{matrix}
	A  & B  \\ C & D 
\end{matrix}\right] \quad  \bigl(A,B,C,D \in \C[q] \bigr).
$$
We have the relations
\begin{align}
	\mathcal{V}^2 &=1, \label{eq:rel1}\\
	(\mathcal{T}\mathcal{V})^2 &=1, \label{eq:rel2}\\
	(\mathcal{S}\mathcal{V})^2 &=1.	\label{eq:rel3}
\end{align}
From (\ref{eq:rel1}) we get $A ^2=D ^2$ and, $A +D =0$ or $B =0= C $. The case where $A =D \neq 0$ gives the identity, which contradicts with $\Psi$ being an injection. Like above, for the case $A =D =0$, we may assume that $B = 1/C$. On the other hand, (\ref{eq:rel2}) gives
$$
I =
\left[\begin{matrix}
	 C 
	&
	q/C
	\\ 
	 C 
	& 
	0
\end{matrix}\right]^2
=
\left[\begin{matrix}
	 C ^2+q
	&
	q
	\\ 
	 C ^2
	& 
	q
\end{matrix}\right],
$$
which has no solution. For the last case where $A =-D \neq0$, we may assume that $A =1$ and $D =-1$. Then
$$
\mathcal{T}\mathcal{V}
=
\left[\begin{matrix}
	q+ C 
	&
	qB -1
	\\ 
	 C 
	& 
	-1
\end{matrix}\right].
$$
We should have $\mathcal{T}\mathcal{V}\not=1$ and $(\mathcal{T}\mathcal{V})^2=1$, so trace should be zero (since an element $M\in \mathsf{PGL}(\C(q))$ is involutive if and only if its trace is 0). Hence, we get $ C =1-q$ and obtain
$$
\mathcal{V}
=
\left[\begin{matrix}
	1
	&
	B 
	\\ 
	1-q
	& 
	-1
\end{matrix}\right].
$$
Then
$$
\mathcal{S}\mathcal{V}
=
\left[\begin{matrix}
	1-q 	&	 -1 \\
	-q		&	-qB 
\end{matrix}\right]
$$
which should have zero trace, by Equation \ref{eq:rel3}. Thus, $B =\frac{1-q}{q}$. We conclude that
$$
\mathcal{V}
=
\left[\begin{matrix}
	1
	&
	\frac{1-q}{q}
	\\ 
	1-q
	& 
	-1
\end{matrix}\right]
=
\left[\begin{matrix}
	q
	&
	1-q
	\\ 
	q-q^2
	& 
	-q
\end{matrix}\right].
$$
This defines the representation $\Psi$ on $\psl$. 
Note that
$$
\mathcal U=\Psi(U)=
\left[\begin{matrix}
	q-1
	&
	1
	\\ 
	q
	& 
	1-q
\end{matrix}\right], \quad
\mathcal K=\Psi(K)=
\left[\begin{matrix}
	1
	&
	-q
	\\ 
	1-q
	& 
	-1
\end{matrix}\right].
$$
\item[\textbf{Case II:}] 
	Let $D=-A\neq 0$. Then we can assume that $A=1,D=-1$. Thus we have 
$$ \mathcal{S}=	
	\left[\begin{matrix}
		1 & B \\ C & -1
	\end{matrix}\right]
	,\
	\mathcal{T}\mathcal{S}=
	\left[\begin{matrix} q+C & qB-1 \\ C & -1
	\end{matrix}\right]
.$$
Direct computation yields
\begin{align*}
	(\mathcal{T}\mathcal{S})^3
	&=
		\left[\begin{matrix} 
			(q+C)^3+C(qB-1)[2(q+C)-1]
			&
			(qB-1)[(q+C)^2+C(qB-1)-(q+C)+1] 
			\\ 
			(q+C)^2+C(qB-1)-(q+C)+1
			& 
			[(q+C)-1](qB-1)-C(qB-1)-1
		\end{matrix}\right]\\
	&=
		\left[\begin{matrix} 
		u^3+v(2u-1)
		&
		v(u^2+v-u+1)
		\\ 
		u^2+v-u+1
		& 
		(u-1)v-v-1
		\end{matrix}\right]=1.
\end{align*}
by the substitution $u=q+C\ ,v=C(qB-1)$. This yields the  system
\begin{align*}
	u^3+v(2u-1)  &=(u-1)v-v-1\\
	v(u^2+v-u+1) &=0 \\
	u^2-u+1+v &=0 
\end{align*}
which reduces to $u^2-u+1 =0$ and $v =0$. Hence, $u=\omega$ or $\overline{\omega}=\omega^{-1}$ where $\omega=\frac{1}{2}+ i \frac{\sqrt 3}{2}$ is a 6th primitive root of unity.
Thus we obtain $C=-q+\omega^{\pm 1}$. Besides, $0=v=qB-1$ yields $B=q^{-1}$. We conclude that
$$\mathcal{S^\pm}=
	\left[\begin{matrix} 
		1
		&
		q^{-1}
		\\ 
		-q+\omega^{\pm 1}
		& 
		-1
	\end{matrix}\right], \quad \mathcal{T}=\left[\begin{matrix} 
		q
		&
		1
		\\ 
		0
		& 
		1
	\end{matrix}\right].
$$
This defines the representation $\Psi^\pm$  on $\psl$.
To extend $\Psi^\pm$ to $\pgl$ let, as in Case I,
$$\mathcal{V}=
\left[\begin{matrix}
	A  & B  \\  C  & D 
\end{matrix}\right].
$$
By (\ref{eq:rel1}) and (\ref{eq:rel2}), the only interesting case is $D =-A \neq 0 $. 
As before, assume $A =1, D =-1$. Then (\ref{eq:rel2}) yields
$$
	1= (\mathcal{T}\mathcal{N})^2=
	\left(
	\left[\begin{matrix}
		q & 1 \\ 0 & 1
	\end{matrix}\right]
	\left[\begin{matrix}
		1 & B  \\  C  & -1
	\end{matrix}\right]
	\right)^2=
	\left[\begin{matrix}
		q+ C  & 
		qB -1 
		\\ 
		 C  & 
		-1
	\end{matrix}\right]^2.
$$
This implies that $ C =1-q$, since the trace should be zero. Also, by (\ref{eq:rel3}):
$$
\mathcal{V}\mathcal{S}^{\pm}=
\left[\begin{matrix}
	1 & B \\ 1-q & -1
\end{matrix}\right]
\left[\begin{matrix}
	1 & q^{-1} \\ -q+\omega^{\pm1} & -1
\end{matrix}\right]=
\left[\begin{matrix}
	1+B(\omega^{\pm1}-q) &
	q^{-1}-B 
	
	\\ 
	1-\omega^{\pm1} & 
	q^{-1}
\end{matrix}\right].
$$
should have zero trace. Therefore, $B=\frac{1+q^{-1}}{q-\omega^{\pm1}}$. As a result, we have
$$
\mathcal{V}^{\pm}=
\left[\begin{matrix}
	1 & 
	\frac{1+q^{-1}}{q-\omega^{\pm1}}
	\\ 
	1-q & 
	-1
\end{matrix}\right].
$$
Note that 
$$
\mathcal L^\pm=\Psi^\pm(L)=
\left[\begin{matrix}
	\omega^{\pm1}
	&
	0
	\\ 
	-q+\omega^{\pm1}
	& 
	-1
\end{matrix}\right], \quad 
\mathcal U^\pm =\Psi^\pm(U)=
\left[\begin{matrix}
	q-\omega &
	\omega+1
	\\ 
	q^2\omega^2-q & 
	-q+\omega
	\end{matrix}\right],
	$$
	$$
\mathcal K^\pm=\Psi^\pm(K)=
\left[\begin{matrix}
	1
	&
	-1+\frac{q(1+1/q)}{q-\omega^{\pm 1}}
	\\ 
	1-q
	& 
	-1
\end{matrix}\right].
$$
\end{itemize}
\end{proof}

\begin{remark}
Note that the representation $\Psi$ remains well-defined on $\pgl$ when we specialize to any non-zero value of $q\in \C$. 
The representation $\Psi^\pm$ has one exception to this rule: if $q=\pm \omega$, then $\mathcal K^\pm,\mathcal V^\pm$ become singular. Hence the representation does not extend to $\pgl$ in this case. The representations $\Psi^\pm$ are still well-defined on $\psl$.
\end{remark}

{\small
\begin{table}[htp]
\caption{Three quantization representations ($\mbox{pdet}$ denotes the projective determinant, which is well-defined up to multiplication by a square within the ring in context).}
\begin{center}
\begin{tabular}{|c|c|c|c|c|}
\hline
&$\Psi$ &pdet&$\Psi^\pm$&pdet\\ \hline &&&&\\[-.6cm]

$\Psi(T)$
    &$\left[\begin{matrix} 
		q
		&
		1
		\\ 
		0
		& 
		1
	\end{matrix}\right]$&$q$&$\left[\begin{matrix} 
		q
		&
		1
		\\ 
		0
		& 
		1
	\end{matrix}\right]$&$q$\\ [.6cm]  \hline &&&&\\[-.6cm]
$\Psi(S)$ & $\left[\begin{matrix} 
		0
		&
		-1
		\\ 
		q
		& 
		0
	\end{matrix}\right]$ &$q$&$\left[\begin{matrix} 
		1
		&
		q^{-1}
		\\ 
		-q+\omega^{\pm 1}
		& 
		-1
	\end{matrix}\right]$&$  q$\\ [.6cm]  \hline &&&&\\[-.6cm]
$\Psi(L)$&$\left[\begin{matrix} 
		1
		&
		-1
		\\ 
		1
		& 
		0
	\end{matrix}\right]$&$1$&$\left[\begin{matrix}
	\omega^{\pm1}
	&
	0
	\\ 
	-q+\omega^{\pm1}
	& 
	-1
\end{matrix}\right]$&$ 1 $\\ [.6cm]  \hline &&&&\\[-.6cm]
$\Psi(U)$&$\left[\begin{matrix}
	q-1
	&
	1
	\\ 
	q
	& 
	1-q
\end{matrix}\right]$&$-q^2+q-1$&$\left[\begin{matrix}
	q-\omega &
	\omega+1
	\\ 
	q^2\omega^2-q & 
	-q+\omega
	\end{matrix}\right]$&$ q^2-q+1 $\\ [.6cm]  \hline &&&&\\[-.6cm]
$\Psi(V)$&$\left[\begin{matrix}
	q
	&
	1-q
	\\ 
	q-q^2
	& 
	-q
\end{matrix}\right]$&$-q^2+q-1$&$\left[\begin{matrix}
	1 & 
	\frac{1+q^{-1}}{q-\omega^{\pm1}}
	\\ 
	1-q & 
	-1
\end{matrix}\right]$&$q^2-q+1$\\ [.6cm]  \hline &&&&\\[-.6cm]
$\Psi(K)$&$\left[\begin{matrix}
	1
	&
	-q
	\\ 
	1-q
	& 
	-1
\end{matrix}\right]$&$-q^2+q-1$&$\left[\begin{matrix}
	1
	&
	-1+\frac{q(1+1/q)}{q-\omega^{\pm 1}}
	\\ 
	1-q
	& 
	-1
\end{matrix}\right]$&$q^2-q+1$\\ [.6cm]  \hline
\end{tabular}
\end{center}
\label{default}
\end{table}%
}

Having established the existence of the quantization maps, 
we adopt the following notations for their images:
\begin{align*}
{\mathsf{PSL}}_2(\Z,q):=      &\,\, \Psi(\psl),        & {\mathsf{PGL}}_2(\Z,q):=       &\,\, \Psi(\pgl), \\
{\mathsf{PSL}}_2^\pm (\Z,q):=&\,\, \Psi^\pm(\psl), &{\mathsf{PGL}}_2^\pm (\Z,q):=&\,\, \Psi^\pm(\pgl).
\end{align*}
Note that 
\begin{align*}
    {\mathsf{PSL}}_2(\Z,q) 
    &<\,  
    \left\{
        \left[
            \begin{matrix}
            	A
            	&
            	B
            	\\ 
            	C
            	& 
            	D
            \end{matrix}
        \right]
        | \,A,B,C,D \in \Z[q], \,  AD-BC\in  q^\Z 
    \right\}    \\
    &<{\mathsf{PGL}}_2(\Z[q,q^{-1}]):=  \,
\left\{
\left[\begin{matrix}
	A
	&
	B
	\\ 
	C
	& 
	D
\end{matrix}\right]
| \,A,B,C,D \in \Z[q], \,  AD-BC\in  \pm q^\Z \right\}
\end{align*}
\begin{align*}
{\mathsf{PGL}}_2(\Z,q) &<\, \left\{
\left[\begin{matrix}
	A
	&
	B
	\\ 
	C
	& 
	D
\end{matrix}\right]
| \,A,B,C,D \in \Z[q], \,  AD-BC\in  (-q^2+q-1)^\Z q^\Z \right\},\\
&<{\mathsf{PGL}}_2(\Z[q,q^{-1},(-q^2+q-1)^{-1}])\\
 &:=\left\{
\left[\begin{matrix}
	A
	&
	B
	\\ 
	C
	& 
	D
\end{matrix}\right]
| \,A,B,C,D \in \Z[q], \,  AD-BC\in \pm (-q^2+q-1)^\Z q^\Z \right\},
\end{align*}
\begin{align*}
{\mathsf{PSL}}_2^\pm (\Z,q)<&\, {\mathsf{PGL}}_2(\Z[\omega][q,q^{-1}]):=\\ 
&\left\{
\left[\begin{matrix}
	A
	&
	B
	\\ 
	C
	& 
	D
\end{matrix}\right]
| \,A,B,C,D \in \Z[\omega][q], \,  AD-BC\in \omega^\Z q^\Z \right\}, \\
{\mathsf{PGL}}_2^\pm (\Z,q)<&{\mathsf{PGL}}_2(\Z[\omega][q,q^{-1},(-q^2+q-1)^{-1}]):=\\ 
&\left\{
\left[\begin{matrix}
	A
	&
	B
	\\ 
	C
	& 
	D
\end{matrix}\right]
| \,A,B,C,D \in \Z[\omega][q], \,  AD-BC\in \omega^\Z (-q^2+q-1)^\Z q^\Z  \right\}.\\
\end{align*}
Also note that we have the natural inclusions of the modular groups
\begin{align*}
\psl < &\,\,\pgl<{\mathsf{PGL}}_2(\Z[q,q^{-1}])<{\mathsf{PGL}}_2(\Z[q,q^{-1},(-q^2+q-1)^{-1}])\\
\psl < &\,\,\pgl<{\mathsf{PGL}}_2(\Z[\omega][q,q^{-1}])<{\mathsf{PGL}}_2(\Z[\omega][q,q^{-1},(q^2-q+1)^{-1}]),
\end{align*}
induced by the inclusions 
$\Z\hookrightarrow \Z[q,1/q]\hookrightarrow \Z[q,1/q, 1/(-q^2+q-1)]$ and 
$\Z\hookrightarrow \Z[\omega][q,1/q]\hookrightarrow \Z[\omega][q,1/q, 1/(q^2-q+1)]$.

The group ${\mathsf{PSL}} (\Z,q)<{\mathsf{PGL}}_2(\Z[q,q^{-1}])$ etc. are rather small subgroups of the right hand sides. For example, by results of \cite{jouteur} we know that the traces of elements of ${\mathsf{PSL}} (\Z,q)$ are palindromic polynomials up to a signed power of $q$.

The representation $\Psi$ is faithful because it specializes to the identity on $\pgl$ at $q=1$. 
The proof of the proposition below is a routine check:
\begin{proposition}\label{pmfaithful}
The representation $\Psi^\pm$ is faithful.
In fact, at $q=1$ the representation $\Psi^\pm$ is conjugate to the subgroup
$
\pgl < {\mathsf{PGL}}_2(\Z[\omega][q,q^{-1},(q^2-q+1)^{-1}])
$
via the transformation $H:=x+\omega^\pm$; in the sense that
\begin{align*}
H T_1 H^{-1}=T, \quad
H S_1^\pm H^{-1}=S,\quad
H  L_1^\pm H^{-1}=L,\\
H  U_1^\pm H^{-1}=U,\quad
H  V_1^\pm H^{-1}=V,\quad
H  K_1^\pm H^{-1}=K.
\end{align*}
where $T_1=\mathcal{T}|_{q=1}$ etc.
\end{proposition}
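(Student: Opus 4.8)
The plan is to deduce the faithfulness of $\Psi^\pm$ from the faithfulness of its specialization at $q=1$, exactly as was done for $\Psi$ above; the one new feature is that here $\Psi^\pm|_{q=1}$ is not the identity but is \emph{conjugate} to the natural inclusion $\iota\colon\pgl\hookrightarrow\PGL_2(\Z[\omega])$, the conjugator being the element $H=x+\omega^{\pm1}$, i.e.\ the projective matrix $\bigl[\begin{smallmatrix}1&\omega^{\pm1}\\0&1\end{smallmatrix}\bigr]$.

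First I would note that $q=1$ is a harmless specialization: by the remark above, the only value at which $\Psi^\pm$ fails to be defined on all of $\pgl$ is $q=\pm\omega$, and $1\neq\pm\omega$, so none of $\mathcal T$, $\mathcal S^\pm$, $\mathcal V^\pm$ (nor the derived $\mathcal U^\pm$, $\mathcal L^\pm$, $\mathcal K^\pm$) becomes singular or acquires a pole when $q=1$. Hence the evaluation homomorphism $\mathrm{ev}_{q=1}$ is defined on all of $\Psi^\pm(\pgl)$ and satisfies $\mathrm{ev}_{q=1}\circ\Psi^\pm=\Psi^\pm|_{q=1}$, so it suffices to prove that $\Psi^\pm|_{q=1}$ is injective.

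Next I would verify the six conjugation identities. Because $M\mapsto HMH^{-1}$ is a homomorphism and $\pgl=\langle S,T,V\rangle$ (the subgroup $\psl=\langle S,T\rangle$ has index two in $\pgl$, and $V\notin\psl$ since $V$ has projective determinant $-1$), it is enough to check the identities for $T$, $S$ and $V$; the relations $L=TS$, $U=SV$, $K=TV$ then supply the other three. For $T$ the identity is immediate, since $T_1=\mathcal T|_{q=1}=\bigl[\begin{smallmatrix}1&1\\0&1\end{smallmatrix}\bigr]$ is unipotent upper-triangular and hence commutes with $H$. For $S$ and $V$ it is a one-line $2\times2$ computation: substitute $q=1$ into $\mathcal S^\pm$ and $\mathcal V^\pm$, then use the single relation $\omega^{\pm2}=\omega^{\pm1}-1$ (equivalently, $\omega^{\pm1}$ is a root of $x^2-x+1$) to collapse the unwanted entries, yielding $HV_1^\pm H^{-1}=\bigl[\begin{smallmatrix}1&0\\0&-1\end{smallmatrix}\bigr]=V$ and $HS_1^\pm H^{-1}=(1-\omega^{\pm1})\bigl[\begin{smallmatrix}0&1\\-1&0\end{smallmatrix}\bigr]=S$ in $\PGL_2$.

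Granting these identities, $\Psi^\pm|_{q=1}(M)=H^{-1}\iota(M)H$ for every $M\in\pgl$; that is, $\Psi^\pm|_{q=1}$ is conjugation by $H^{-1}$ composed with $\iota$. Since $\iota$ is injective and conjugation by $H^{-1}$ is an automorphism of $\PGL_2(\C)$, the composite $\Psi^\pm|_{q=1}$ is injective, and therefore so is $\Psi^\pm$; its image lies in $\PGL_2(\Z[\omega][q,q^{-1},(q^2-q+1)^{-1}])$ by the projective determinants recorded in Table~\ref{default}, which is the asserted ambient group. The only points needing attention are bookkeeping: verifying that $q=1$ avoids the bad value $\pm\omega$, and keeping in mind that the $S$-identity is an equality of \emph{projective} matrices (it holds literally only after clearing the scalar $1-\omega^{\pm1}$). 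In keeping with the statement, there is no real obstacle — the computation is a routine check.
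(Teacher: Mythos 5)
Your proof is correct and is exactly the ``routine check'' the paper alludes to: verify the conjugation identities on the generators $T$, $S$, $V$ of $\pgl$ using $\omega^{\pm2}=\omega^{\pm1}-1$, and deduce faithfulness of $\Psi^\pm$ from injectivity of its specialization at $q=1$ (which is legitimate since $q=1$ is not a zero of $q$ or $q^2-q+1$, so evaluation is a well-defined homomorphism on the image). The computations $HS_1^\pm H^{-1}=(1-\omega^{\pm1})\bigl[\begin{smallmatrix}0&1\\-1&0\end{smallmatrix}\bigr]=S$ and $HV_1^\pm H^{-1}=V$ check out.
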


\section{Quantizations of  rationals}
Having established three quantization representations $\Psi$, $ \Psi^\pm$, now we ask: does there exists equivariant functions $\psi, \psi^\pm: {\mathsf P^1}(\Z) \to {\mathsf P^1}(\C[q])$ with respect to these representations?

\subsection{Morier Genoud and Ovsienko's representation $\Psi$} 
In the case of $\psi$, the equivariance conditions for $\psl$ reads as (using temporarily the notation $[x]_q$ for $\psi(q)$):
\begin{align}
[1+x]_q=&1+q[x]_q \quad (\mbox{$T$-equivariance}), \notag \\
\left[-\frac{1}{x}\right]_q=&-\frac{1}{q[x]_q} \quad (\mbox{$S$-equivariance}), \label{MGOequivariance}\\
\left[1-\frac{1}{x}\right]_q=&1-\frac{1}{[x]_q} \quad (\mbox{$L$-equivariance}) \notag
\end{align}
(in fact any pair of equations above is sufficient since any two of $S, L, T$ generate $\psl$).
For the extension of $\Psi$ to $\pgl$, these conditions become (beware these conditions are inconsistent over $\Q$
as is explained further below)
\begin{align}
\left[-x\right]_q=&\frac{q[x]_q+(1-q)}{q(1-q)[x]_q-q} \quad (\mbox{$V$-equivariance}) &\notag\\
\left[\frac{1}{x}\right]_q=&\frac{(q-1)[x]_q+1}{q[x]_q+1-q} \quad (\mbox{$U$-equivariance}) &\\
[1-x]_q=&\frac{[x]_q-q}{(1-q)[x]_q-1} \quad (\mbox{$K$-equivariance}) & \notag
\end{align}
(in fact only the $U$- and $T$-equivarience are sufficient, since they generate $\pgl$).
Now since we require $[1]_q=1$ for quantization, setting $x=1$ in the equivariance condition for $U$ gives
$$
1=\frac{(q-1)+1}{q+1-q}=q,\\
$$
which is inconsistent (if we require $[1]_q=q$ then we get $q=\mathcal U(q)=1$).
In fact, $[1]_q$ must be one of the two fixed points of $\mathcal U$, i.e. 
$$
x=\frac{(q-1)\pm \sqrt{q^2-q+1}}{q}.
$$
This shows that there is no consistent way to define $\psi$ in a $\pgl$-equivariant way on ${\mathsf P^1}(\Z)$
(unless we extend the target space and define $\psi(1)$ accordingly, sacrificing the quantization condition $\psi(1)=1$
or $\psi(1)=q$). Note that the natural extension of  $\psi$ to $\Real\setminus \Q$ is $\pgl$-equivariant~\cite{jouteur}.

Is it possible to consistently define $\psi$ in a $\psl$-equivariant way, as given by Morier Genoud and Ovsienko's?
The answer is known to be yes~\cite{genoud}. We will reprove this result since we want to do the same for the representations $\Psi^\pm$. In order to do this, we need to return to our initial setting of Equation~\ref{setting}.

If an equivariant pair $(\Psi, \psi)$  exists, and if the $G$-action on $X$ is transitive, $\psi$ is determined by its value $\psi(x_0):=y_0$ on any point $x_0\in X$. 
Indeed, assume $x\in X$.  
By transitivity,  there is a $g\in G$ with $gx_0=x$. 
Hence $\psi(x)=\psi(gx_0)=\Psi(g)\psi(x_0)=\Psi(g)y_0$.

On the other hand, there may exist other elements $h$ with $hx_0=x$; 
equivalently $k:=h^{-1}g \in G_{x_0}$, the stabilizer of $x$ under $G$.
This forces $\Psi(k)\in G_{y_0}$. 
Hence we have the following necessary condition on the $G$-sets $X$ and $Y$ for the existence of an equivariant pair:
$$
\stab_G({x})<\stab_G({\psi(x)}) \quad \forall x\in X 
$$
\begin{lem}\label{lem:stabilizer}
Suppose that $X$ is a transitive $G$-set and $\Psi:G \to  \mathrm{Aut}(Y)$ a homomorphism. 
Let $x_0\in X, y_0 \in Y$. Then there exists a map $\psi: X \to Y$ so that $\psi(x_0)=y_0$ and $(\Psi, \psi)$ is an equivariant pair if and only if $\Psi(\stab_G(x_0))\subseteq\stab_{\mathrm{Aut}(Y)}(y_0)$. 
If such a function $\psi$ exists, then it is unique. 

\end{lem}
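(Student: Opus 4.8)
The plan is to prove Lemma~\ref{lem:stabilizer} as a clean abstract statement about transitive $G$-sets, separating the construction of $\psi$, the verification that it is well-defined, and the proof of uniqueness. The necessity of the condition $\Psi(\stab_G(x_0))\subseteq\stab_{\mathrm{Aut}(Y)}(y_0)$ has already been observed in the paragraph preceding the lemma, via the remark that if $x$ is fixed by $g$ then $\psi(x)$ is fixed by $\Psi(g)$; I would simply recall that argument briefly: if $(\Psi,\psi)$ is an equivariant pair with $\psi(x_0)=y_0$ and $k\in\stab_G(x_0)$, then $\Psi(k)y_0=\Psi(k)\psi(x_0)=\psi(kx_0)=\psi(x_0)=y_0$, so $\Psi(k)\in\stab_{\mathrm{Aut}(Y)}(y_0)$.

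For sufficiency, assume $\Psi(\stab_G(x_0))\subseteq\stab_{\mathrm{Aut}(Y)}(y_0)$. I would define $\psi$ as follows: given $x\in X$, use transitivity to pick some $g\in G$ with $gx_0=x$, and set $\psi(x):=\Psi(g)y_0$. The main point — the only real content of the lemma — is that this is independent of the choice of $g$. If $g,h\in G$ both satisfy $gx_0=x=hx_0$, then $h^{-1}g\in\stab_G(x_0)$, so by hypothesis $\Psi(h^{-1}g)\in\stab_{\mathrm{Aut}(Y)}(y_0)$, i.e. $\Psi(h)^{-1}\Psi(g)y_0=y_0$, hence $\Psi(g)y_0=\Psi(h)y_0$. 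Thus $\psi$ is a well-defined function $X\to Y$. Taking $g=e$ shows $\psi(x_0)=y_0$. Equivariance is then immediate: for $g_0\in G$ and $x\in X$, if $gx_0=x$ then $(g_0g)x_0=g_0x$, so $\psi(g_0x)=\Psi(g_0g)y_0=\Psi(g_0)\Psi(g)y_0=\Psi(g_0)\psi(x)$.

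For uniqueness, suppose $\psi'$ is any map with $\psi'(x_0)=y_0$ such that $(\Psi,\psi')$ is an equivariant pair. For arbitrary $x\in X$ choose $g$ with $gx_0=x$; then $\psi'(x)=\psi'(gx_0)=\Psi(g)\psi'(x_0)=\Psi(g)y_0=\psi(x)$. Hence $\psi'=\psi$.

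I do not expect any serious obstacle here — the argument is the standard orbit–stabilizer bookkeeping, and the paragraph immediately before the lemma already contains the key idea in embryonic form. The only thing to be careful about is logical hygiene: stating precisely that the well-definedness check uses the hypothesis in exactly one place, and noting that the construction requires no additional structure on $Y$ beyond $\Psi$ being a homomorphism into $\mathrm{Aut}(Y)$, so that the lemma applies verbatim with $X={\mathsf P^1}(\Z)$, $G=\psl$, $Y={\mathsf P^1}(\C[q])$, and $\Psi$ one of the three representations of the Theorem. It is worth remarking, for the sequel, that verifying the hypothesis in each concrete case amounts to checking a single generator of each point-stabilizer (since $\psl$-stabilizers of points of ${\mathsf P^1}(\Z)$ are cyclic), which is where the actual computations — for instance that $\psi(1)$ must be a fixed point of $\mathcal U$ — will enter.
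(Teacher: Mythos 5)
Your proof is correct and follows essentially the same approach as the paper, which gives only a one-line sketch of the same construction ($\psi(x):=\Psi(g)y_0$ for $g$ with $gx_0=x$, well-defined precisely by the stabilizer condition). You simply spell out the well-definedness check, the equivariance verification, and the uniqueness argument that the paper leaves implicit.
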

\begin{proof}
	Let $x\in X$. Choose $g\in G$ so that $x=g\cdot x_0$. Define $\psi(x):=\Psi(g)\cdot y_0\in Y$. Then, $\psi:X \to Y$ is a well-defined function if and only if $\Psi(g)$ stabilizes for $y_0$ whenever $g$ stabilizes $x_0$.
\end{proof}

\begin{proposition}(\cite{Valentin}) \label{thm:MGOconsistent}
Equivariance equations (\ref{MGOequivariance}) are consistent for the $\psl$-action; i.e. there exists functions $\psi$ satisfying them.
\end{proposition}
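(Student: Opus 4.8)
The plan is to apply Lemma~\ref{lem:stabilizer} with $G=\psl$, $X={\mathsf P^1}(\Z)$, $Y={\mathsf P^1}(\C[q])$, $\Psi$ the Morier--Genoud--Ovsienko representation just constructed, and the base point $x_0=1=[1:1]$, $y_0=[1:1]$. Since $\psl$ acts transitively on ${\mathsf P^1}(\Z)$ (by the Euclidean algorithm / the standard fact that $\sl$ acts transitively on primitive integer vectors), the only thing to check is the stabilizer condition $\Psi(\stab_{\psl}(1))\subseteq\stab_{\mathsf{PGL}_2(\C(q))}(y_0)$. So first I would identify $\stab_{\psl}(1)$: the Möbius transformations in $\psl$ fixing the point $1$. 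Since $\psl$ is torsion-generated but acts on ${\mathsf P^1}(\Q)$ with the point stabilizers being trivial (a parabolic fixing $1$ would have to lie in $\psl$, but $1$ is a cusp and its stabilizer in $\psl$ is generated by a parabolic element), I would recall that $\stab_{\psl}([a:b])$ for $[a:b]\in{\mathsf P^1}(\Q)$ is infinite cyclic, generated by a single parabolic element; concretely $\stab_{\psl}(1)$ is generated by $P:=STS^{-1}$ or some explicit word, the conjugate of $T^{\pm 1}$ carrying $\infty$ to $1$.

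The core computation is then: pick the generator $P$ of $\stab_{\psl}(1)$ explicitly as a word in $S$ and $T$, compute $\Psi(P)$ as a product of the matrices $\mathcal S=\left[\begin{smallmatrix}0&-1\\q&0\end{smallmatrix}\right]$ and $\mathcal T=\left[\begin{smallmatrix}q&1\\0&1\end{smallmatrix}\right]$, and verify that the resulting projective matrix fixes $[1:1]$, i.e. that $A+B=C+D$ where $\Psi(P)=\left[\begin{smallmatrix}A&B\\C&D\end{smallmatrix}\right]$. A convenient choice: the map $x\mapsto x/(1-x+x)$... more carefully, $1$ is the image of $\infty$ under $x\mapsto 1-1/x = L$? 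No --- $L(\infty)=1$, and $L\in\psl$, so $\stab_{\psl}(1) = L\,\stab_{\psl}(\infty)\,L^{-1}$, and $\stab_{\psl}(\infty)=\langle T\rangle$. Hence $\stab_{\psl}(1)=\langle LTL^{-1}\rangle$. Thus the single required check is that $\mathcal L\,\mathcal T\,\mathcal L^{-1}$ fixes $[1:1]$, where $\mathcal L=\mathcal T\mathcal S=\left[\begin{smallmatrix}1&-1\\1&0\end{smallmatrix}\right]$ (note, as remarked in the proof of the Theorem, $\mathcal L$ has no $q$ in it). Since $\mathcal L(1)=1-1/1=0$... wait, I need the point fixed; let me instead note $\mathcal L$ sends $1\mapsto 0\mapsto\infty\mapsto 1$, so $\mathcal L$ does not fix $1$, but $\mathcal L\mathcal T\mathcal L^{-1}$ fixes $\mathcal L(\infty)=1-1/\infty=1$. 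Good. So I compute $\mathcal L\mathcal T\mathcal L^{-1}$ directly and confirm it fixes $[1:1]$; this is a $2\times 2$ matrix multiplication with entries in $\Z[q]$, and the fixed-point condition is one polynomial identity.

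Once the stabilizer condition is verified, Lemma~\ref{lem:stabilizer} immediately gives both the existence and the uniqueness of $\psi$ with $\psi(1)=1$, and equivariance with respect to all of $\psl$ (equivalently, equations~(\ref{MGOequivariance}), since $S,T,L$ generate $\psl$ and any two of them suffice). The main obstacle --- really the only non-formal point --- is getting the generator of $\stab_{\psl}(1)$ right and carrying out the matrix computation carefully; a secondary subtlety is making sure the image $\psi([m:n])$ actually lands in ${\mathsf P^1}(\C[q])$ (i.e. polynomial, not merely rational, entries) and matches the normalization $\psi(m)=(1-q^m)/(1-q)$, which one checks by applying $\mathcal T$ repeatedly to $\psi(1)=1$: $\psi(m)=\mathcal T^{m-1}(1)=1+q+\cdots+q^{m-1}$, consistent with the stated formula. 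I would close by remarking that the same Lemma~\ref{lem:stabilizer} strategy will be reused verbatim for $\Psi^\pm$ in the next subsection, the only change being that the base value $y_0$ must be taken to be a fixed point of $\mathcal U^\pm$ (or, for the $\psl$-equivariant statement, any point whose $\Psi^\pm(\stab_{\psl}(1))$-orbit is trivial).
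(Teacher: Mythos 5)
Your proposal is correct and follows essentially the same route as the paper: both apply Lemma~\ref{lem:stabilizer} at the base point $x_0=1$, identify $\stab_{\psl}(1)=\langle TSTST^{-1}\rangle=\langle LTL^{-1}\rangle$, compute $\mathcal A=\Psi(LTL^{-1})=\left[\begin{smallmatrix}0&q\\-1&q+1\end{smallmatrix}\right]$, and reduce consistency to $\psi(1)$ being a fixed point of $\mathcal A$ (the paper solves $\mathcal A x=x$ to exhibit both admissible values $1$ and $q$, whereas you verify the single choice $[1:1]$ in advance). The only differences are cosmetic, and your check that $\psi(m)=\mathcal T^{m-1}(1)=1+q+\cdots+q^{m-1}$ recovers the quantization normalization is a harmless addition.
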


\begin{proof}
Let  $x_0=1$. The stabilizer for $n=1$ for the $\psl$-action on $\mathsf P^1(\Z)$ is
$$ \{TST^nST^{-1}:n \in \mathsf Z\}=\langle TSTST^{-1}\rangle,$$
where
$$
A:=TSTST^{-1}=\begin{bmatrix} 
        		0	&	1 \\ 
        		-1	&	2
	           \end{bmatrix},
    \text{ with \ }
\mathcal{A}:=\Psi(A)=\begin{bmatrix} 
		0
		&
		q
		\\ 
		-1
		& 
		q+1
	\end{bmatrix}
$$
By Lemmma~\ref{lem:stabilizer}, the condition for $\psi(1)$ is
$$ \Psi\left(\stab_{\psl}(1)\right)=\langle\mathcal{A}\rangle \subset \stab_{\pglc}(\psi(1)), $$
which is equivalent to $\psi(1)$ being a fixed point of $\mathcal{A}$. Solving $\mathcal{A}x=x$, we obtain $1$ and $q$ as fixed points of $\mathcal{A}$ and hence as possible choices for $\psi(1)$.
\end{proof}
Note that $\psi(1)=1\iff \psi(\infty)=\infty$, and $\psi(q)=1\iff \psi(\infty)=1/(1-q)$.
The corresponding quantization maps $\psi$ were respectively denoted $[x]_q^\sharp$ and $[x]_q^\flat$ in~\cite{jouteur}.

\subsection{The conjugate representations $\Psi^\pm$}
In this case the equivariance conditions for $\psl$ reads as (using temporarily the notation $[x]_q^\pm$ for $\psi^\pm(q)$):
\begin{align}\label{topkaraq}
[1+x]_q^\pm=&1+q[x]_q \quad (\mbox{$T$-equivariance}),\notag\\
\left[-\frac{1}{x}\right]_q^\pm=&\frac{q[x]_q+1}{q(-q+\omega^{\pm 1})[x]_q-q} \quad (\mbox{$S$-equivariance}),\\
\left[1-\frac{1}{x}\right]_q^\pm=&\frac{\omega^{\pm 1}[x]_q}{(-q+\omega^{\pm 1})[x]_q-1} \quad (\mbox{$L$-equivariance}).\notag
\end{align}
It follows that 
\begin{align}\label{topkarapgl}
\left[-x\right]_q^\pm=&\frac{q(q-\omega^{\pm 1})[x]_q+(1+q)}{q(1-q)(q-\omega^{\pm 1})[x]_q-q(q-\omega^{\pm 1})} \quad (\mbox{$V$-equivariance}),\notag\\
\left[\frac{1}{x}\right]_q^\pm=&\frac{(q-\omega^{\pm 1})[x]_q+1+\omega^{\pm 1}}{q(-1+\omega^{\pm1})(q-\omega^{\pm1})[x]_q-q+\omega^{\pm 1}} \quad (\mbox{$U$-equivariance }),\\
[1-x]_q^\pm=&\frac{(q-\omega^{\pm1})[x]^{\pm}_q+1+\omega^{\pm1}}{(q-\omega^{\pm1})(1-q)[x]_q^\pm+(-q+\omega^{\pm1})} \notag
 \quad (\mbox{$K$-equivariance }).
\end{align}
In this case, the two fixed points of $\mathcal U^\pm$ are
$$
\frac{q-\omega^{\pm1}\pm \omega^{\pm1}\sqrt{q^{2}-q+1}}{q\left(1+\omega^{\pm2} q\right)},
$$
and the equations can not be made consistent over $\C[q,q^{-1}, (q^2-q+1)^{-1}]$. 
Hence, no $\Psi^\pm$-equivariant functions $\psi^\pm$ exists on $\pgl$. As for the group $\psl$ we have
\begin{proposition}
Equivariance equations (\ref{topkaraq}) are consistent for the $\psl$-action; i.e. there exists functions $\psi$ satisfying them.
\end{proposition}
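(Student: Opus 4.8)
The plan is to run the argument of Proposition~\ref{thm:MGOconsistent} verbatim, with $\Psi$ replaced by $\Psi^\pm$, invoking Lemma~\ref{lem:stabilizer}. I take $X=\mathsf{P^1}(\Z)$ with its transitive $\psl$-action, base point $x_0=1=[1:1]$, and $\Psi=\Psi^\pm$. By Lemma~\ref{lem:stabilizer}, a $\Psi^\pm$-equivariant map $\psi^\pm\colon\mathsf{P^1}(\Z)\to\mathsf{P^1}(\C[q])$ with a prescribed value $\psi^\pm(1)=y_0$ exists, and is then unique, exactly when $\Psi^\pm(\stab_{\psl}(1))$ fixes $y_0$. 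As recalled in the proof of Proposition~\ref{thm:MGOconsistent}, $\stab_{\psl}(1)=\langle A\rangle$ with $A=TSTST^{-1}$, so — the group being cyclic — the condition collapses to the single requirement that $y_0$ be a fixed point of $\mathcal{A}^\pm:=\Psi^\pm(A)=\mathcal{T}\mathcal{S}^\pm\mathcal{T}\mathcal{S}^\pm\mathcal{T}^{-1}$. Thus the whole problem reduces to exhibiting a fixed point of $\mathcal{A}^\pm$ in $\mathsf{P^1}(\C(q))=\mathsf{P^1}(\C[q])$.

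The second step is the computation of $\mathcal{A}^\pm$ from the matrices $\mathcal{S}^\pm$, $\mathcal{T}$ of the Theorem. From $\mathcal{T}\mathcal{S}^\pm=\mathcal{L}^\pm=\left[\begin{matrix}\omega^{\pm1}&0\\-q+\omega^{\pm1}&-1\end{matrix}\right]$ (cf.\ Table~\ref{default}) and $\mathcal{T}^{-1}=\left[\begin{matrix}1&-1\\0&q\end{matrix}\right]$ in projective form, one obtains
$$
\mathcal{A}^\pm=(\mathcal{T}\mathcal{S}^\pm)^2\,\mathcal{T}^{-1}=\left[\begin{matrix}\omega^{\pm2}&-\omega^{\pm2}\\ c & q-c\end{matrix}\right],\qquad c:=(-q+\omega^{\pm1})(\omega^{\pm1}-1),
$$
so that $\mathcal{A}^\pm x=x$ becomes the quadratic $c\,x^2+(q-c-\omega^{\pm2})\,x+\omega^{\pm2}=0$ over $\C(q)$. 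The key point — and the step I expect to require the most care — is that the discriminant of this quadratic is a perfect square in $\C[q]$. Using only $\omega^{\pm2}=\omega^{\pm1}-1$ and $\omega^{\pm3}=-1$, one simplifies the discriminant to $(\omega^{\pm1}-1)(q-1)^2$; and since $\omega^{\pm1}-1=\omega^{\pm2}=(\omega^{\pm1})^2$ is the square of a root of unity, the discriminant equals $\bigl(\omega^{\pm1}(q-1)\bigr)^2$. Consequently both fixed points
$$
x=\frac{-(q-c-\omega^{\pm2})\pm\omega^{\pm1}(q-1)}{2c}
$$
lie in $\Q(\omega)(q)\subset\C(q)$, hence are honest points of $\mathsf{P^1}(\C[q])$. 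Taking $\psi^\pm(1)$ to be either of them, Lemma~\ref{lem:stabilizer} produces a well-defined $\Psi^\pm$-equivariant function $\psi^\pm\colon\mathsf{P^1}(\Z)\to\mathsf{P^1}(\C[q])$, which by construction satisfies the $S$-, $L$-, $T$-equivariance relations~(\ref{topkaraq}). The two choices of sign give two normalizations, parallel to the pair $[x]^\sharp_q,[x]^\flat_q$ of the Morier Genoud--Ovsienko case.

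The only genuine content is therefore the discriminant simplification; what makes the equations consistent for $\psl$ is precisely that this discriminant closes up as a square over $\C(q)$, in contrast with the fixed points of $\mathcal{U}^\pm$, which really do involve $\sqrt{q^2-q+1}$ and therefore obstruct $\pgl$-equivariance. As a sanity check I would specialize $q=1$: there $\mathcal{A}^\pm$ is, by Proposition~\ref{pmfaithful}, conjugate through the transformation $H$ to the parabolic matrix $A$, so it is parabolic, the two roots above coincide, and their common value is $1-\omega^{\pm1}=-\omega^{\pm2}$, the image under $H^{-1}$ of the fixed point $1$ of $A$ — exactly as it must be.
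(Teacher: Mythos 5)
Your proposal is correct and follows essentially the same route as the paper: both reduce the problem via Lemma~\ref{lem:stabilizer} to finding a fixed point of $\mathcal{A}^\pm=(\mathcal{T}\mathcal{S}^\pm)^2\mathcal{T}^{-1}$ in $\mathsf{P^1}(\C(q))$, compute the same matrix, and obtain the same two fixed points (the paper lists them as $\omega^{\mp1}$ and $1/(1+\omega^{\pm2}q)$, matching your quadratic-formula expressions). Your explicit observation that the discriminant closes up as the square $\bigl(\omega^{\pm1}(q-1)\bigr)^2$ over $\C(q)$ is a correct and worthwhile way of making visible why the fixed points land in $\mathsf{P^1}(\C[q])$, but it is the same argument the paper carries out by exhibiting the fixed points directly.
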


\begin{proof}
To determine possible choices for $\psi(1)$, we again consider the stabilizer of $x_0=1$ for the action of $\psl$ on $\Z$. Recall that 
 $\stab_{\psl}(1)=\langle A \rangle$ where $A=TSTST^{-1}$. Thus
\begin{align*}
	\Psi^\pm(A)	&=:\mathcal{A^\pm}=\Psi(TSTST^{-1}) \\
			&=\mathcal{TS^\pm TS^\pm} \mathcal{T}^{-1}=(\mathcal{TS^\pm})^2\mathcal{T}^{-1} \\
			&=	\begin{bmatrix}
					\omega^{\pm 1} & 0 \\ -q+\omega^{\pm 1} & -1
				\end{bmatrix}^2
				\begin{bmatrix}
					1 & -1 \\ 0 & q
				\end{bmatrix} 
			=\begin{bmatrix}
				\omega^{\pm 2} & 0 \\ (q-\omega^{\pm 1})(1-\omega^{\pm 1}) & 1
				\end{bmatrix}
				\begin{bmatrix}
				1 & -1 \\ 0 & q
				\end{bmatrix} \\
			&=	\begin{bmatrix}
					\omega^{\pm 2} & -\omega^{\pm 2} \\ (q-\omega^{\pm 1})(1-\omega^{\pm 1}) & \omega^{\pm 1}-\omega^{\pm 2}+\omega^{\pm 1}q
				\end{bmatrix}
            =	\begin{bmatrix}
					\omega^{\pm 2} & -\omega^{\pm 2} \\ 
                    -\omega^{\pm 2}(q-\omega^{\pm 1}) & 1+\omega^{\pm 1}q
				\end{bmatrix}\\
            &=	\begin{bmatrix}
                -1 & 1 \\ 
                (q-\omega^{\pm 1}) & \omega^{\pm 1}+\omega^{\pm 2}q
            \end{bmatrix}
\end{align*}
As in the proof of Theorem~\ref{thm:MGOconsistent}, the condition for $\psi(1)$ is equivalent to $\psi(1)$ being a fixed point of $\mathcal{A}^{\pm}$. Solving $\mathcal{A}^{\pm}x=x$; we obtain $\omega^{-1}$ and $\frac{1}{1+\omega^2q}$ as fixed points of $\mathcal{A}^{+}$, $\omega$ and $\frac{1}{1-\omega q}$ as fixed points of $\mathcal{A}^{-}$ hence as possible choices for $\psi(1)$.
\end{proof}

%

\section{Specializations}
By {\it specialization} we mean fixing a value of  $q$ for the equivariant pair $(\Psi, \psi)$, or 
$(\Psi,^\pm \psi^\pm)$. Let $r \in \C$ and for $r \neq 0$ define the Möbius transformations
$$
T_r:= x\mapsto 1+r x, \quad S_r:= x\mapsto -\frac{1}{r x}.
$$
These generate a subgroup
$$
\mathsf{PSL}_2(\Z, q=r):= \langle T_r, S_r \rangle <\mathsf{PGL}_2(\C)  
$$
with a surjection (specialization map)
$$
\Psi_r: \mathsf{PSL}_2(\Z, q) \mapsto \mathsf{PSL}_2(\Z, q=r).
$$
(We may define $\mathsf{PSL}_2(\Z, q=0)$ to be the trivial group).
We can similarly define the group $\mathsf{PSL}^\pm_2(\Z, q)$, and for $q\neq 0, \omega^{\pm1}$ the groups 
$\mathsf{PGL}_2(\Z, q)$,
$ \mathsf{PGL}_2(\Z, q)$; along with the specialization map $\Psi^\pm_r$.
The transformations
$$
U_r, \, V_r, \, K_r, \, U_r^+, \, V_r^+, \, K_r^+,\,  U_r^-,\,  V_r^-,\,  K_r^-  \in \mathsf{PGL}_2(\C)
$$
are defined accordingly. We will use the notations $\psi_r$ and $\psi_r^\pm$ for the corresponding equivariant maps.

In particular we have
$$
\mathsf{PSL}_2(\Z, q=1) = \psl, \quad  
\mathsf{PGL}_2(\Z, q=1) = \pgl.
$$
By Proposition~\ref{pmfaithful}, we also have
$$
\mathsf{PSL}^\pm_2(\Z, q=1) \simeq \psl, \quad  
\mathsf{PGL}^\pm_2(\Z, q=1) \simeq \pgl.
$$
\begin{proposition}
If $r \in \C$ is not algebraic, then the specialization maps 
\begin{align*}
\Psi_r &:\mathsf{PGL}_2(\Z, q) \to \mathsf{PGL}_2(\Z, r)\\
\Psi^\pm_r&: \mathsf{PGL}^\pm_2(\Z, q) \to \mathsf{PGL}^\pm_2(\Z, r)
\end{align*}
are isomorphisms.
\end{proposition}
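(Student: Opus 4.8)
The plan is to notice that each specialization map is surjective essentially by construction --- its target $\mathsf{PGL}_2(\Z, r)$, resp.\ $\mathsf{PGL}^\pm_2(\Z, r)$, is \emph{defined} as the group generated by the specialized generators $T_r, S_r, U_r,\dots$ --- so the whole statement reduces to showing that the kernels are trivial. Fix $r\in\C$ transcendental over $\Q$; then $r$ is transcendental over $\overline{\Q}$ as well, so in particular $r\neq 0$, $r^{2}-r+1\neq 0$ and $r\neq\pm\omega$ (so that the target groups are even defined), and no nonzero polynomial with coefficients in $\Z$, or in $\Z[\omega]$, vanishes at $r$. This last fact is precisely what the hypothesis ``$r$ not algebraic'' buys us, and it is the only nontrivial ingredient.

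Next I would spell out how $\Psi_r$ acts on matrices. By the inclusions recorded above, every $\gamma\in\mathsf{PGL}_2(\Z,q)$ is represented by a matrix $M(q)=\left[\begin{smallmatrix}A&B\\ C&D\end{smallmatrix}\right]$ with $A,B,C,D\in\Z[q]$ not all zero and with $AD-BC\in\pm q^{\Z}(-q^{2}+q-1)^{\Z}$; for $\gamma\in\mathsf{PGL}^\pm_2(\Z,q)$ the same holds with $\Z[q]$ replaced by $\Z[\omega][q]$. Because the generators specialize entry by entry and forming the projective class commutes with substituting $q=r$, the map $\Psi_r$ carries the class of $M(q)$ to the class of $M(r)$; and $M(r)$ is an honest element of $\mathsf{PGL}_2(\C)$, since $M(r)\neq 0$ (otherwise $A,B,C,D$ would all vanish at the transcendental $r$, hence be $0$) and $\det M(r)=(AD-BC)\big|_{q=r}\neq 0$ (as $r\neq 0$, $r^{2}-r+1\neq 0$). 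The same remarks apply to $\Psi^\pm_r$.

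Finally, suppose $\gamma\in\ker\Psi_r$, i.e.\ $M(r)$ is scalar. This says exactly that the polynomials $B(q)$, $C(q)$ and $A(q)-D(q)$ --- which lie in $\Z[q]\subseteq\overline{\Q}[q]$, resp.\ $\Z[\omega][q]\subseteq\overline{\Q}[q]$ --- all vanish at the transcendental number $r$, hence are the zero polynomial. Thus $B=C=0$ and $A=D$ in $\C[q]$, so $M(q)$ is already scalar and $\gamma$ is the identity; therefore $\ker\Psi_r=\ker\Psi^\pm_r=\{1\}$, and, being surjective as well, both maps are isomorphisms. I do not anticipate a genuine obstacle here --- the mathematics is elementary --- but some care is needed in the bookkeeping of the second paragraph: one must choose a polynomial representative so that ``trivial after specialization'' becomes ``a polynomial vanishes at $r$'', and one must check that evaluating this representative at $q=r$ really computes $\Psi_r(\gamma)$, which is exactly where $r\neq 0$ and $r^{2}-r+1\neq 0$ are used to keep the specialized matrix invertible.
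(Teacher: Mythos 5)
Your proof is correct and follows essentially the same route as the paper: the only substantive ingredient is that a nonzero polynomial over $\Z$ (or $\Z[\omega]$) cannot vanish at a transcendental number, applied to the entries of a polynomial matrix representative. You are in fact slightly more careful than the paper's one-line argument, which only invokes the vanishing of the off-diagonal entries; your additional observation that $A-D$ must also vanish identically (so that $M(q)$ is genuinely scalar, not merely diagonal) is needed to complete the conclusion.
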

\begin{proof}
Let $r \in \C$ be transcendental and let $M\in \mathsf{PGL}_2(\Z, q)$. 
If $\Psi_r(M)$ is  identity, then the off-diagonal entries of $M$, which can be taken to be 
integral polynomials in $q$, 
must vanish at $q=r$. Hence $M$ must be the identity.
\end{proof}
\begin{proposition}\label{prop:cyclotome}
Let $r \in \C\backslash \{0\}$. 
Then $\Psi_r(\mathcal T^m)=I$ if and only if $r\neq 1$ is an $m$th root of unity.
Idem for $\Psi^\pm_r(\mathcal T^n).$
\end{proposition}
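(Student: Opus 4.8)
The plan is to compute the matrix $\mathcal{T}^m$ explicitly and read off exactly when it is scalar. Since $\mathcal T$ is upper triangular, $\mathcal{T}^m = \left[\begin{smallmatrix} q^m & * \\ 0 & 1\end{smallmatrix}\right]$, so as a projective matrix $\Psi_r(\mathcal T^m)=I$ requires first $r^m=1$ (so that the two diagonal entries agree up to scalar) and then the vanishing of the upper-right entry. A short induction on $m$ (or summing a geometric series) gives $\mathcal{T}^m = \left[\begin{smallmatrix} q^m & 1+q+\cdots+q^{m-1} \\ 0 & 1\end{smallmatrix}\right]$. Specializing $q=r$, the matrix $\Psi_r(\mathcal{T}^m)$ is the identity in $\mathsf{PGL}_2(\C)$ precisely when $r^m=1$ and $1+r+\cdots+r^{m-1}=0$; and when $r^m=1$ the latter sum is $0$ iff $r\neq 1$ (for $r=1$ the sum is $m\neq 0$; for $r\neq 1$ it equals $(r^m-1)/(r-1)=0$). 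This proves the forward direction and, run backwards, the converse: if $r\neq 1$ is an $m$th root of unity then both conditions hold, so $\Psi_r(\mathcal T^m)=I$.

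For the representations $\Psi^\pm$ the argument is identical because $\Psi^\pm(T)$ is the \emph{same} matrix $\mathcal{T}=\left[\begin{smallmatrix} q & 1 \\ 0 & 1\end{smallmatrix}\right]$ (see the statement of the Theorem and Table~\ref{default}); hence $\Psi^\pm_r(\mathcal{T}^n)=I$ under exactly the same condition on $r$. One only needs to note that the specialization $\Psi^\pm_r$ is defined for $r\neq 0,\omega^{\pm1}$, but roots of unity other than $1$ that happen to equal $\omega^{\pm1}$ cause no problem: $\omega=\exp(2\pi i/6)$ is itself a primitive $6$th root of unity, and for those $r$ the transformation $T_r$ and hence $T_r^n$ are still perfectly well-defined Möbius maps, so the computation of $T_r^n$ as a matrix goes through verbatim even though the ambient map $\Psi^\pm_r$ on all of $\mathsf{PGL}^\pm_2(\Z,q)$ is only partially defined there.

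I do not expect any genuine obstacle here; the only point requiring a word of care is the distinction between "$\mathcal{T}^m$ equals the identity matrix" (false for all $m$, since the geometric sum never vanishes identically in $\C[q]$) and "$\Psi_r(\mathcal{T}^m)$ equals the identity of $\mathsf{PGL}_2(\C)$" (true exactly for the stated $r$), i.e.\ one must specialize \emph{before} projectivizing and keep track of the scalar ambiguity. I would state the geometric-sum identity, specialize, and conclude; the whole proof is two or three lines.
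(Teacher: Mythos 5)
Your proof is correct and is essentially the paper's own argument: the paper computes $T_r^m(x)=r^mx+\frac{1-r^m}{1-r}$ and observes this is the identity iff $r^m=1$ (for $r\neq 1$), which is exactly your matrix computation with the geometric sum $1+r+\cdots+r^{m-1}=\frac{1-r^m}{1-r}$ written in affine form. Your extra remarks about the scalar ambiguity in $\mathsf{PGL}_2$ and about $\Psi^\pm(T)=\mathcal{T}$ being the same matrix are accurate and harmless elaborations of the same one-line calculation.
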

\begin{proof}
This is because, for $r \neq 1$,
$$
T_r^m(x)=r^mx+\frac{1-r^m}{1-r}=x  \quad (\forall x) \iff r^m=1.
$$
\end{proof}
The group
$\mathsf{PSL}_2(\Z,q=-1)\simeq \mathsf{PSL}_2^\pm(\Z,q=-1)$ is the symmetric group on three letters.
The group $\mathsf{PSL}_2\left(\Z,q=\exp{\frac{2\pi i}{k}}\right)$ is finite for $k<6$, and is solvable when $k=6$.

\medskip
The kernel of $\Psi_r$ may be non-trivial 
for some non-cyclotomic $r$, as the next example shows:

\begin{example} 
$$
\left(\mathcal T^3\mathcal S\right)^4 =
\frac{1}{1-q}
\left[\begin{matrix}
(1-q^5)(q^4+3q^3+3q^2+3q+1) & -q^2(1-q^3)(q^4+2q^3+q^2+2q+1) \\
(1-q^3)(q^4+2q^3+q^2+2q+1) & -q^2(1-q^4)(1+q)
\end{matrix}\right]
$$
In particular, $(\mathcal T^3\mathcal S)^4=1$ if and only if $q$ is a third root of unity or is one of \\
$r_{1,2}=0.2071067812 \pm 0.9783183435i$,\\
 $r_3=-0.5310100565$,\\
  $r_4=-1.883203506$. \\
Observe that $|r_{1,2}|=1$ and $r_2r_3=1$. However,  $r_{1,2}$ are not cyclotomic.
Further experiments indicate that if an element of $\mathsf{PSL}_2(\Z,q=r)$ collapses to identity at a real place $r$, then 
$r<0$. 
\end{example}

\begin{example} 
Let $X := (T_q^2 S_q T_q^3 S_q T_q^5 S_q  T_q^7 S_q)^5$.
Then 
\begin{align*}
P:=\gcd(X_{1,2}, X_{2,1})=
&(q^{4}+ q^{3}+ q^{2}+ q + 1)(q^{48}+ 11q^{47}+ 66q^{46}+ 286q^{45}+ 997q^{44}+  \\
&2960q^{43}+7743q^{42}+ 18246q^{41}+ 39342q^{40}+ 78517q^{39}+ 146316q^{38}+  \\
&256331q^{37}+424464q^{36}+ 667281q^{35}+ 999418q^{34}+ 1430283q^{33}+  \\
&1960540q^{32}+ 2579098q^{31}+3261413q^{30}+ 3969776q^{29}+ 4655997q^{28}+  \\
&5266354q^{27}+5748204q^{26}+ 6057177q^{25}+  6163639q^{24}+ 6057177q^{23}+   \\
&5748204q^{22}+5266354q^{21}+ 4655997q^{20}+ 3969776q^{19}+3261413q^{18}+ \\
& 2579098q^{17}+ 1960540q^{16}+ 1430283q^{15}+ 999418q^{14}+ 667281q^{13}+ \\
&424464q^{12}+ 256331q^{11}+ 146316q^{10}+ 78517q^{9}+ 39342q^{8}+ 18246q^{7}+\\
& 7743q^{6}+2960q^{5}+ 997q^{4}+ 286q^{3}+ 66q^{2}+ 11q+ 1)q^{20}
\end{align*}
\begin{figure}[H]
\centering
\fbox{\includegraphics[width=0.35\linewidth]{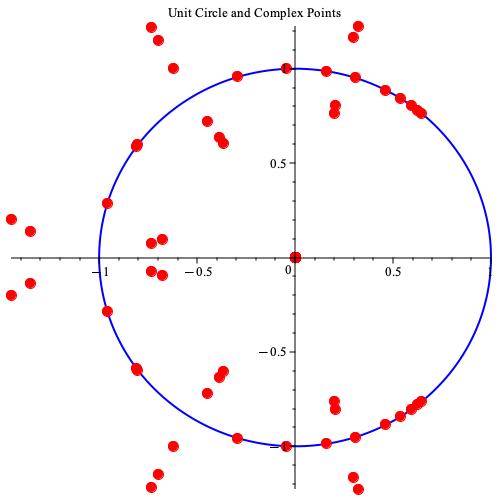}}
\caption{The locus where $X := (T_q^2 S_q T_q^3 S_q T_q^5 S_q  T_q^7 S_q)^5$ collapses to identity.}
\end{figure}
Observe  first the cyclotomic factor, which shows 
$\Psi_r(X)=I$ inside $\mathsf{PSL}_2(\Z,q=\exp{\frac{2 m \pi i}{5}})$, $m=1,2,3,4$. 
Also observe that the main factor is a palindromic polynomial. 
Hence, its roots are symmetric with respect to the circle. There are no real roots in this case.
For each root $r$, we have $\Psi_r(X)=I$.
The existence of many roots on the circle is somewhat surprising.
The corresponding element $X$ of $\mathsf{PSL}^\pm_2(\Z,q)$ yields identical results.
We don't know whether $\mathsf{PSL}_2(\Z,q=r)$ is a one-relator quotient of $\psl$,
where $r$ is a root of $P$. What we do know is that, by Proposition~\ref{prop:cyclotome}, these $\psl$-quotients are not finite
if $r^5\neq 1$.
Note in passing that the subgroup $\langle \Psi^{-1}(X)\rangle $ is represented by a modular graph~\cite{zeytin2} (a quotient graph of the Farey tree).
\end{example}

There are many questions pertaining to the groups $\mathsf{PSL}_2(\Z,q=r)$:
can one identify the loci
$$
\Lambda:=\left\{ r \, \middle| \Psi_r: \psl \to \mathsf{PSL}_2(\Z,q=r) \mbox{ is not injective } \right\} \subset \bar\Q?
$$ 
$$
\Lambda^\pm:=\left\{ r \, \middle| \Psi_r^\pm: \psl \to \mathsf{PSL}_2^\pm(\Z,q=r) \mbox{ is not injective } \right\} \subset \bar\Q?
$$ 
When this is the case, can one determine the kernel of $\Psi_r$?  Is the image of $\Psi_r$ always a 1-relator quotient of $\psl$?
We believe that $\mathsf{PSL}_2(\Z,q=r) \simeq \mathsf{PSL}^\pm_2(\Z,q=r) $ for all $r \in \C$.

Given an ideal $I\subset (\Z/N\Z)[q,q^{-1}]$ (e.g. $I=(P)$ where $P$ is the polynomial in Example 2), it is also of interest to study the kernels of the representations
$$
\psi: \psl \to {\mathsf{PGL}}_2((\Z/N\Z)[q,q^{-1}]/I),
$$
and the relations of these kernels to the principle congruence modular subgroups of $\psl$.

\section{Specialization to real values}
When $r \in \Real\backslash\{0\}$, the quantization map $\psi(x)=[x]_r$ is a real-valued function of $x$ and we can plot its graph. Table~\ref{tab:roots_figures} at the end of the paper contains the plots  $\psi$ for some positive values of $r$.
We observe the discontinuous though monotonic nature of these maps with jumps at rationals, as well as the fact that the plot converges to $y=x$ as $q\to 1$. 

Table~\ref{tab:roots_figures2} at the end of the paper depicts $\psi$ for some negative values of $r$.
We observe their discontinuous nature again, albeit qualitatively different from the case $r>0$. 
Our aim is now to elucidate this difference.

We first draw reader's attention to the resemblance of the plots in Table~\ref{tab:roots_figures2} with the plot below
(Figure~\ref{jimmm})  of the involution Jimm 
defined in \cite{conumerator}: 

\begin{figure}[H]
\centering
\fbox{\includegraphics[width=0.45\linewidth]{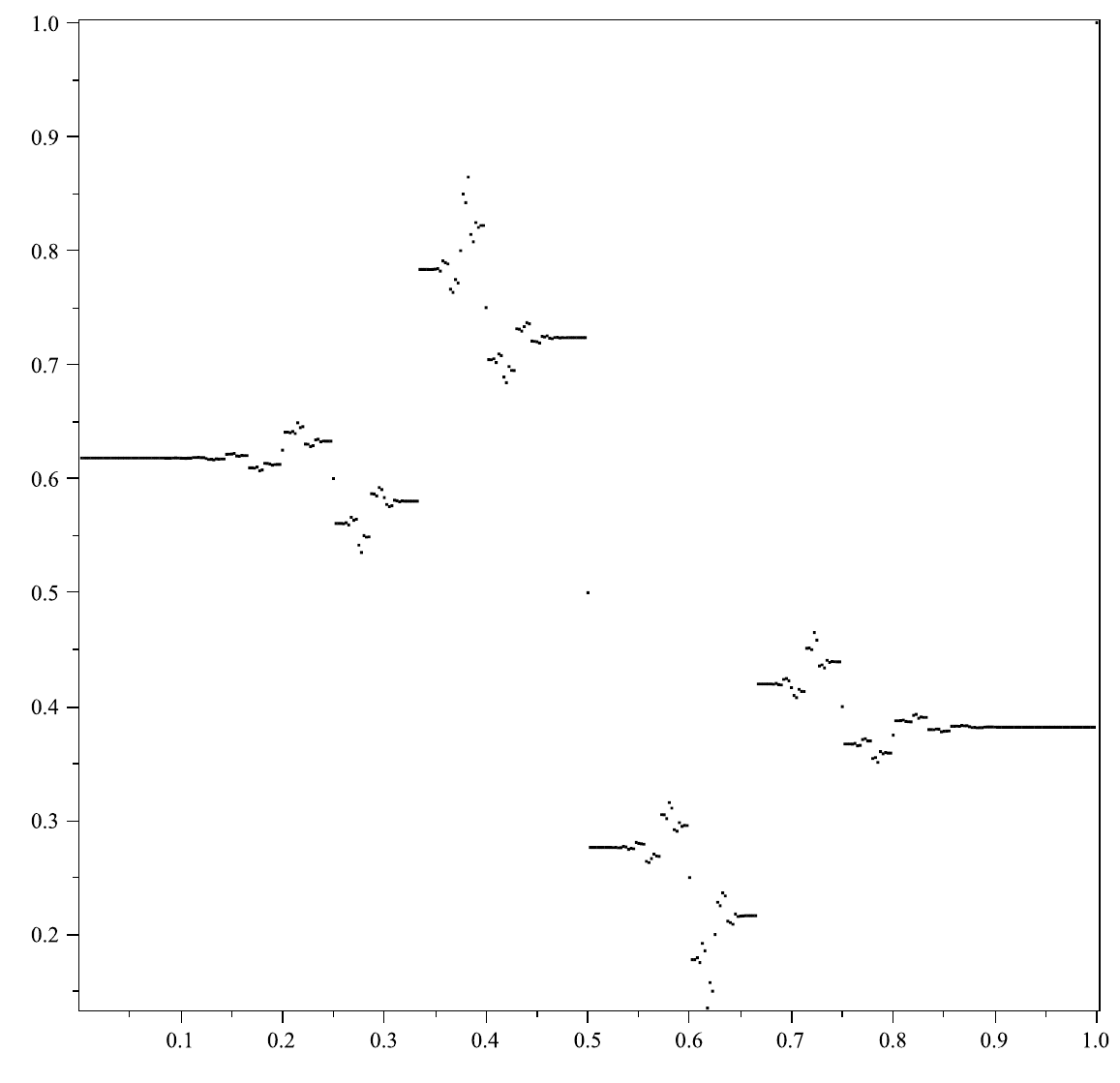}}
\label{jimmm}
\caption{Plot of the involution jimm on the unit interval.}\end{figure}

This involution $\Jimm$ is induced by Dyer's outer automorphism $\alpha$ of $\pgl$ as we explain below.
Dyer's outer automorphism is also manifested as an automorphism of the Farey tree (the two-sided Stern-Brocot tree)
of rationals \cite{uludagg}, which `{maximally violates}' the natural ordering of the nodes of the said tree, 
and also the natural ordering of its boundary. Hence, in a certain sense, $\Jimm$ is `{anti-monotonic}',
and the maps $\phi_r$ for $r\in \Real_{<0}$ visibly exhibits a similar behavior.

\subsection{Dyer's outer automorphism of $\pgl$ and the involution Jimm }
This  automorphism is defined in terms of the generators $U,V,K$ of $\pgl$ by
$$
\alpha(U)=U, \quad \alpha(K)=K, \quad \alpha(V)=UV \implies \alpha(T)=TU.
$$ 
It is easy to see that $\alpha$ is involutive, i.e. $\alpha\circ\alpha=Id$.
Since $\pgl$ is generated by $T$ and $U$, the set of equations $\alpha(U)=U$ and $\alpha(T)=TU$ is a complete set for determining $\alpha$.

By definition, a function $f$ is said to be {\it $\alpha$-equivariant} if the following system is satisfied:
\begin{equation}\label{eq:jimmfe}
f\left(\frac1x\right)=\frac{1}{f(x)}, \quad
f(1-x)=1-f(x), \quad 
f(-x)=-\frac{1}{f(x)} 
\implies
f(1+x)=1+\frac1{f(x)}
\end{equation}
Since $\pgl$ is generated by $T$ and $U$, the equations $f(1/x)=1/f(x)$ and $f(1+x)=1+1/f(x)$ are in fact sufficient for characterizing equivariance.

Now, the question is, do $\alpha$-equivariant functions $f$ exist?

Note that Equations~\ref{eq:jimmfe} are not consistent on $\mathsf P^1(\Z) $: setting $x=1$ in $f(1/x)=1/f(x)$ forces $f(1)=\pm 1$, 
and setting $x=0$ in $f(1-x)=1-f(x)$ forces $f(0) \in \{0,2\}$ whereas setting $x=0$ in $f(-x)=-1/f(x)$ implies
$f(0)^2=-1$. We see that the fixed points of $U$ and $V$ imposes an obstruction to the existence of an $\alpha$-equivariant function with respect to the $\pgl$-action on $\mathsf P^1(\Z) $. 

The index-2 subgroup $\psl<\pgl$ is not $\alpha$-invariant, since
$$
\alpha (\psl)=\alpha(\langle L, S\rangle)=\langle(\alpha(L), \alpha(S)\rangle=\langle L, V \rangle
$$
Therefore the functional equations for an $\alpha$-equivariant function on $\psl$ are
\begin{equation}\label{eq:jimmpsl}
f(1-1/x)=1-1/f(x), \quad 
f(-1/x)=-f(x).
\end{equation}
The largest  $\alpha$-invariant subgroup of $\psl$ is the index-2 subgroup $\Gamma<\psl$ generated by $\langle L, SLS \rangle$, since
$$
\alpha(L)=\alpha(KU)=KU=L,
\quad
\alpha(SLS)=VKUV=VU.KUKU.UV=SL^2S.
$$
Therefore $\alpha$ restricts to an outer automorphism of $\Gamma<\pgl$.
Note that $L.SLS=T^2\in \Gamma$.
\begin{lem}
The $\Gamma$-action on $\mathsf P^1(\Z)$ is transitive.
\end{lem}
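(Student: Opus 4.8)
The plan is to exhibit $\Gamma$ as a concrete finite-index subgroup of $\psl$ and to show directly that it already moves $\infty$ to every rational. Recall $\Gamma=\langle L, SLS\rangle$ with $L:x\mapsto 1-1/x$, and that $L\cdot SLS=T^2\in\Gamma$ where $T:x\mapsto x+1$. So $\Gamma$ contains $T^2$, which acts on $\mathsf P^1(\Z)$ by $[m:n]\mapsto[m+2n:n]$; hence $\Gamma$ already realizes every even translation, and the $T^2$-orbit of $\infty=[1:0]$ is fixed, but the $T^2$-orbit of $0=[0:1]$ is all of $\{[2k:1]\}$. The first concrete step is to pin down $\Gamma$ as a group: since $\psl$ has the presentation $\langle S,L\mid S^2=L^3=1\rangle$ and $\Gamma$ is the kernel of the obvious surjection $\psl\to\Z/2$ sending $S\mapsto 1$, $L\mapsto 0$ (this is well-defined precisely because $L$ has order $3$), $\Gamma$ is free of rank $2$ — it is in fact the commutator-type subgroup generated by $L$ and $SLS$, and $[\psl:\Gamma]=2$.

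The main argument is then a transitivity bootstrap, exactly as in the standard proof that $\psl$ itself acts transitively on $\mathsf P^1(\Z)$, but carried out with the restricted generating set. I would argue by induction on $\min(|m|,|n|)$ (or on $|m|+|n|$) that every $[m:n]\in\mathsf P^1(\Z)$ with $\gcd(m,n)=1$ lies in the $\Gamma$-orbit of $\infty=[1:0]$. For the reduction step one needs, given $[m:n]$ with both entries nonzero, a $\Gamma$-element that strictly decreases the size; $T^{\pm 2}$ handles reduction of $m$ modulo $2n$, and to swap the roles of the two coordinates one uses $S=UV$, which is \emph{not} in $\Gamma$ — so the key point is that $SLS\in\Gamma$, together with $L\in\Gamma$, suffices to perform the "swap-and-reduce" move in two steps. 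Concretely, $L:[m:n]\mapsto[m-n:m]$ and $SLS:[m:n]\mapsto[\text{(something mixing }m,n)]$; I would compute these two matrices explicitly (they are $\left[\begin{smallmatrix}1&-1\\1&0\end{smallmatrix}\right]$ and its $S$-conjugate $\left[\begin{smallmatrix}0&-1\\1&1\end{smallmatrix}\right]$, say, up to sign), observe that the subsemigroup they generate already contains matrices reducing any $[m:n]$ toward $[1:0]$, and run the Euclidean descent. An equivalent and perhaps cleaner route: show $\Gamma$ contains $T^2$ and the order-$3$ element $L$, note that $\langle T^2, L\rangle$ visibly contains $LT^2L^{-1}$, $L^2T^2L^{-2}$, and check by a short computation that these three transl-conjugates already generate a subgroup acting transitively on $\mathsf P^1(\Z)$; since $\psl/\Gamma$ has order $2$ and $\infty$ has even-order... — but the first route is safer, so I would follow the Euclidean descent.

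I expect the main obstacle to be purely bookkeeping: verifying that the descent step can always be taken \emph{within} $\Gamma$ rather than within $\psl$. In $\psl$ one freely alternates $S$ and $T$; here $S\notin\Gamma$, so each "apply $S$" must be absorbed into an $SLS$ or into a parity-matched product $S T^{2k} S$. The honest way to handle this is to note that the coset $\psl\setminus\Gamma$ contains $S$, so any word in $S,T$ of even $S$-length lies in $\Gamma$, and the standard continued-fraction algorithm expressing a given $g\in\psl$ with $g(\infty)=[m:n]$ as an alternating product $T^{a_0}ST^{a_1}S\cdots$ can be modified — by inserting $T^{\pm 1}$ redundantly or by replacing $T^{a_k}S\,T^{a_{k+1}}S$ blocks — so that one lands in $\Gamma$ whenever the target is in the $\Gamma$-orbit of $\infty$; and that orbit is all of $\mathsf P^1(\Z)$ precisely because $T^2\in\Gamma$ bridges the two cosets' worth of translates. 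Once the descent is set up correctly the verification is a finite calculation with two $2\times 2$ integer matrices, so no deep input is needed; the subtlety is entirely in organizing the induction so that parity of $S$-length is respected at every stage.
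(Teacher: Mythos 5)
Your proposal never actually closes the argument, and the parity bookkeeping it rests on is wrong at the decisive point. You reduce the ``main obstacle'' to the claim that any word in $S,T$ of even $S$-length lies in $\Gamma$, but this is false: $\Gamma$ is the kernel of the homomorphism $\psl\to\Z/2$ sending $S\mapsto 1$, $L\mapsto 0$, and since $T=LS$ one has $T\mapsto 1$, so $T\notin\Gamma$ even though its $S$-length is zero. Membership of a word $T^{a_0}ST^{a_1}S\cdots$ in $\Gamma$ is governed by the parity of $\sum a_i$ \emph{plus} the number of $S$'s, so your plan to ``insert $T^{\pm1}$ redundantly'' changes the coset, and the descent as you have organized it does not stay inside $\Gamma$. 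The concluding sentence --- that the orbit of $\infty$ is everything ``precisely because $T^2\in\Gamma$ bridges the two cosets' worth of translates'' --- asserts the lemma rather than proving it. (A smaller but real error: $\Gamma=\langle L, SLS\rangle$ is not free of rank $2$; both generators have order $3$, so $\Gamma\cong\Z/3*\Z/3$. The free rank-$2$ subgroup of $\psl$ is the commutator subgroup, of index $6$, not $2$.)

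The Euclidean descent is also far more machinery than the situation requires. Since $[\psl:\Gamma]=2$ and $\psl$ acts transitively, $\Gamma$ has at most two orbits, and one only needs a single coset-correcting element. The paper's proof is exactly this: given $x\in{\mathsf P^1}(\Z)$, choose $M\in\psl$ with $Mx=0$; if $M\in\Gamma$ then $LM\in\Gamma$ and $LMx=L(0)=\infty$, while if $M\notin\Gamma$ then $SM\in\Gamma$ and $SMx=S(0)=\infty$. The whole point is that $L$ and $S$ lie in different cosets of $\Gamma$ yet both carry $0$ to $\infty$. If you want to salvage your route, the minimal fix is to replace the false parity criterion by the correct one and then, for each coset, exhibit an element of $\Gamma$ carrying the target to $\infty$ --- at which point you will have rediscovered the two-line argument above.
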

\begin{proof}
Let $x\in \mathsf P^1(\Z)$. We want to find an $M\in \Gamma$ such that $Mx=\infty$.
Since the $\psl$-action on $\mathsf P^1(\Z)$ is transitive, there exists an $M\in \psl$ such that 
$Mx=0$. If $M\in \Gamma$, then $LM\in \Gamma$ too and $LMx=L0=\infty$.
If $M\notin\Gamma$, then $SM\in \Gamma$ and $SMx=S0=\infty$.
\end{proof}

The functional equations for an $\alpha$-equivariant function on $\Gamma$ are
\begin{equation}\label{eq:jimmsgfe}
f(1-1/x)=1-1/f(x), \quad 
f(-1/(1+x))=-1-1/f(x).
\end{equation}

\begin{theorem}\label{thm:jimmconsistent}
Systems (\ref{eq:jimmpsl})  and (\ref{eq:jimmsgfe}) are consistent on $\mathsf P^1(\Z) $; in fact there exists exactly 
two functions $f$ satisfying them, with
$$
f(1)=\frac{3+\sqrt{5}}{2}=\varphi^2 \mbox{ or } f(1)= \frac{3-\sqrt{5}}{2}=\bar\varphi^2,
$$
where $\varphi=\frac{1+\sqrt5}{2}$ is the golden ratio and $\bar\varphi=-\varphi^{-1}$ its Galois conjugate.
\end{theorem}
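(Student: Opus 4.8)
The plan is to apply Lemma~\ref{lem:stabilizer} twice, once for the $\psl$-action (system~(\ref{eq:jimmpsl})) and once for the $\Gamma$-action (system~(\ref{eq:jimmsgfe})), and to check that the two computations yield the same value $f(1)$, so that the two systems are simultaneously consistent. The key point is that both $\psl$ and $\Gamma$ act transitively on $\mathsf P^1(\Z)$ (the latter by the Lemma just proved), so an $\alpha$-equivariant function is determined by $f(1)$, and exists if and only if $\alpha\bigl(\stab_G(1)\bigr)\subseteq \stab_{\pgl}\bigl(f(1)\bigr)$, i.e.\ if and only if $f(1)$ is a common fixed point of $\alpha\bigl(\stab_G(1)\bigr)$.

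First I would compute the stabilizer of $x_0=1$ under the relevant group. For $\psl$ we already recorded $\stab_{\psl}(1)=\langle A\rangle$ with $A=TSTST^{-1}$ (equivalently $\langle TSTST^{-1}\rangle$ as in the proof of Proposition~\ref{thm:MGOconsistent}); its image under $\alpha$ is the cyclic group generated by $\alpha(A)$, which I would express in the generators $U,V,K$ using $\alpha(S)=\alpha(UV)=U\cdot UV=V$, $\alpha(T)=TU$, $\alpha(L)=L$. So $\alpha(A)=\alpha(TSTST^{-1})=(TU)V(TU)V(TU)^{-1}$, and I would reduce this to an explicit $\pgl$-matrix. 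The consistency condition for system~(\ref{eq:jimmpsl}) then reads: $f(1)$ is a fixed point of $\alpha(A)$, i.e.\ a root of a quadratic whose coefficients I compute; I claim this quadratic is $x^2-3x+1=0$, whose roots are exactly $\varphi^2=(3+\sqrt5)/2$ and $\bar\varphi^2=(3-\sqrt5)/2$. For $\Gamma=\langle L, SLS\rangle$, which is an index-$2$ subgroup of $\psl$, the stabilizer $\stab_\Gamma(1)=\stab_{\psl}(1)\cap\Gamma$ is either all of $\langle A\rangle$ or its index-$2$ subgroup $\langle A^2\rangle$, depending on whether $A\in\Gamma$; in either case $\alpha\bigl(\stab_\Gamma(1)\bigr)$ is generated by $\alpha(A)$ or $\alpha(A)^2$, which has the same fixed-point set, so system~(\ref{eq:jimmsgfe}) produces the same two candidates for $f(1)$. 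Since $\Gamma\subset\psl$, any solution of~(\ref{eq:jimmpsl}) restricts to a solution of~(\ref{eq:jimmsgfe}); conversely the transitivity of the $\Gamma$-action already forces uniqueness given $f(1)$, so there are exactly two functions in each case and they coincide.

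Finally I would note that once $f(1)$ is fixed to be one of the two roots, Lemma~\ref{lem:stabilizer} guarantees a well-defined $\psl$-equivariant $f$, and one checks directly that this $f$ actually satisfies the full system~(\ref{eq:jimmfe}) restricted to $\psl$ (equivalently~(\ref{eq:jimmpsl})), since those are precisely the relations defining $\alpha$ on the generators of $\psl$; symmetry of the golden ratio under its Galois conjugation explains why the two solutions are interchanged by $\sqrt5\mapsto-\sqrt5$.

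\textbf{Main obstacle.} The delicate step is the explicit stabilizer computation and the verification that $\alpha(A)$ (resp.\ $\alpha(A)^2$) has characteristic polynomial with fixed points exactly $\varphi^2,\bar\varphi^2$: one must be careful that $\stab_{\psl}(1)$ is genuinely cyclic (not just its image), that conjugation by $\alpha$ does not enlarge the fixed-point locus, and that the same two values drop out of the $\Gamma$-computation even though $\stab_\Gamma(1)$ may be a proper subgroup of $\stab_{\psl}(1)$. Everything else is a routine matrix calculation.
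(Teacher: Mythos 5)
Your proposal is correct and follows essentially the same route as the paper: apply Lemma~\ref{lem:stabilizer} to $x_0=1$, identify $\stab_{\psl}(1)=\langle TSTST^{-1}\rangle$, and read off $f(1)$ as a fixed point of $\alpha(A)$, which indeed satisfies $x^2-3x+1=0$ with roots $\varphi^2,\bar\varphi^2$. The only difference is that you spell out the $\Gamma$-case (noting $\stab_\Gamma(1)$ is $\langle A\rangle$ or $\langle A^2\rangle$ with the same fixed-point set, and in fact it is $\langle A^2\rangle$ since $A\notin\Gamma$), where the paper simply asserts that the computation for $\Gamma$ gives the same result.
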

We denote the corresponding maps by $\Jimm_\sharp$ and $\Jimm_\flat$, so that
$\Jimm_\sharp(1)=\varphi^2$ and $\Jimm_\flat(1)=\bar\varphi^2$.
By transitivity of the $\Gamma$-action, these are defined on the whole set $\mathsf P^1(\Z)$.
\begin{proof}
It suffices to prove this for $\psl$, as  the proof  for $\Gamma$ leads to exactly the same result.
Let  $x_0=1$. Its stabilizer  for the action of $\psl$ on $\mathsf P^1(\Z)$ is
$$
 \{TST^nST^{-1}:n \in \mathsf Z\}=\langle TSTST^{-1}\rangle=\langle LTL^{-1}\rangle,
$$
where
$$
A:=TSTST^{-1}=\begin{bmatrix} 
        		-1	&	2 \\ 
        		-2	&	3
	           \end{bmatrix},
    \text{ with \ }
\alpha(A)=\begin{bmatrix} 
		-1
		&
		1
		\\ 
		-1
		& 
		2
	\end{bmatrix}
$$
By Lemmma~\ref{lem:stabilizer}, the condition for $f(1)$ is
$$ \Psi\left(\stab_{\psl}(1)\right)=\langle{\alpha(A)}\rangle \subset \stab_{\pglc}(f(1)), $$
which is equivalent to $f(1)$ being a fixed point of $\alpha(A)$. Solving $\alpha(A^2)x=x$, we obtain 
$\varphi^2$, $\bar\varphi^2$
as fixed points of $\alpha(A)$ and hence as possible choices for $f(1)$. 
\end{proof}

There is a little nuisance about the functions $\Jimm_\sharp$ and $\Jimm_\flat$ in that they don't land on the set 
$\mathsf P^1(\Z)$. (One would expect them to be involutions because $\alpha$ is involutive).
In fact, there does exist an involution $\Jimm$ of $\Q^+\subset \mathsf P^1(\Z)$  with $\Jimm(1)=1$ and satisfying  the equivariance equations 
$\Jimm(1/x)=1/\Jimm(x)$ and $\Jimm(1+x)=1+1/\Jimm(x)$. This function 
can then be extended to $\Q\setminus\{0\}$ via $f(-1)=-1/\Jimm(x)$, at the expense of sacrificing the equivariance conditions (\ref{eq:jimmpsl})  or (\ref{eq:jimmsgfe}), which the extended $\Jimm$ does not always obey (see \cite{conumerator}).
Moreover, for any irrational $x\in \Real$, the limit $\Jimm(y):=\lim_{y\to x} \Jimm(x)$ exists.
We extend $\Jimm$ to $\Real\setminus \Q$ as this limit and we keep the notation $\Jimm$ for the extended function.
It is continuous on $\Real\setminus \Q$, sending the set $\mathcal N$ of golden numbers 
(i.e. the $\pgl$-orbit of $\varphi$) to $\Q$ in a 2-1 manner. To wit,
$$
\Jimm(\Jimm_\sharp(x))=\Jimm(\Jimm_\flat(x))=x \quad (x\in \Q).
$$
(One has $\{\lim_{y\to x^+}\Jimm(x), \lim_{y\to x^-}\Jimm(x)\}=\{ \Jimm_\sharp(x), \Jimm_\flat(x)\}$ for all $x\in \Real$).
The restriction of $\Jimm$ to $\Real \backslash (\Q \cup \mathcal N)$ is then an involution, 
and is $\alpha$-equivariant under the $\pgl$-action. In other words, it satisfies everywhere the functional equations~(\ref{eq:jimmfe}) (see \cite{conumerator}).
The amount of jump of $\Jimm$ at $x$ equals $|\Jimm_\sharp(x)-\Jimm_\flat(x)|$.
In fact, for every irrational $x$, the limits below exists and are equal:
$$
\lim_{y\to x}\Jimm_\sharp(x)=\lim_{y\to x}\Jimm_\flat(x)=\lim_{y\to x}\Jimm(x).
$$

\begin{theorem} Let $\Jimm_\sharp$ and $\Jimm_\flat$  be the $\alpha$-equivariant functions with respect to the $\psl$-action defined above.
\begin{enumerate}
\item The representation $\Psi_r: \pgl\to  \mathsf{PGL}_2(\Z,q=r)$ is conjugate to Dyer's outer automorphism $\alpha$ if and only if 
$
r=-\varphi^2 $ or $ r=-\bar\varphi^2.$
More precisely, there exists 
$M\in  \mathsf{PSL}_2(\C)$ with 
$$
M U_r M^{-1}= U, \quad M K_r M^{-1}=K, \quad M V_r M^{-1}=UV
$$
if and only if $(r, M)$ is one of 
$$
\left(-\bar\varphi^{2}, \quad \frac{x+\varphi}{-\varphi x+\varphi^2}\right), \quad 
\left(-\varphi^2, \quad \frac{x+\bar\varphi}{-\bar\varphi x+\bar\varphi^2}\right)
$$
with
$$
M\circ \psi_{-\bar\varphi^{2}}=\Jimm_\sharp, \quad
M\circ \psi_{-\varphi^{2}}=\Jimm_\flat.
$$ 
\item The representation $\Psi_r^+: \pgl\to  \mathsf{PGL}_2(\Z,q=r)$ is conjugate to Dyer's outer automorphism $\alpha$ if and only if 
$
r=-\varphi^2 $ or $ r=-\bar\varphi^2.$
More precisely, there exists 
$M\in  \mathsf{PSL}_2(\C)$ with 
$$
M U_r^+ M^{-1}= U, \quad M K_r^+ M^{-1}=K, \quad M V_r^+ M^{-1}=UV
$$
if and only if $(r, M)$ is one of 
$$
\left(-\bar\varphi^{2}, \quad 
\frac{x-\omega}
{(1+\bar\varphi\omega)x-(\bar\varphi\omega+\bar\varphi+\omega)}\right), \quad 
\left(-\varphi^{2}, \quad 
\frac{x-\omega}
{(1+\varphi\omega)x-(\varphi\omega+\varphi+\omega)}\right).
$$
with
$$
M\circ \psi_{-\bar\varphi^{2}}^+=\Jimm_\sharp, \quad
M\circ \psi_{-\varphi^{2}}^+=\Jimm_\flat.
$$ 
\item 
The representation $\Psi_r^-: \pgl\to  \mathsf{PGL}_2(\Z,q=r)$ is conjugate to Dyer's outer automorphism $\alpha$ if and only if 
$
r=-\varphi^2 $ or $ r=-\bar\varphi^2.$
More precisely, there exists 
$M\in  \mathsf{PSL}_2(\C)$ with 
$$
M U_r^- M^{-1}= U, \quad M K_r^- M^{-1}=K, \quad M V_r^- M^{-1}=UV
$$
if and only if $(r, M)$ is one of 
$$
\left(-\bar\varphi^{2}, \quad 
\frac{x-\bar\omega}
{(1+\bar\varphi\bar\omega)x-(\bar\varphi\bar\omega+\bar\varphi+\bar\omega)}\right), \quad 
\left(-\varphi^{2}, \quad 
\frac{x-\bar\omega}
{(1+\varphi\bar\omega)x-(\varphi\bar\omega+\varphi+\bar\omega)}\right).
$$
with
$$
M\circ \psi_{-\bar\varphi^{2}}^-=\Jimm_\sharp, \quad
M\circ \psi_{-\varphi^{2}}^-=\Jimm_\flat.
$$ 
\end{enumerate}
\end{theorem}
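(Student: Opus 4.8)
The plan is to treat the three items uniformly, since they differ only in the choice of sixth root of unity entering $\Psi^{\pm}$; I will describe the strategy for item (1) and indicate the (minor) modifications for items (2) and (3). The statement is an \emph{if and only if}, so there are two directions: showing that conjugacy to $\alpha$ forces $r\in\{-\varphi^2,-\bar\varphi^2\}$, and exhibiting the explicit conjugator $M$ at those two values together with the identification $M\circ\psi_r=\Jimm_\sharp$ or $\Jimm_\flat$.

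First I would set up the conjugacy equations. An element $M\in\mathsf{PSL}_2(\C)$ conjugates $\Psi_r$ to $\alpha$ precisely when the three relations $MU_rM^{-1}=U$, $MK_rM^{-1}=K$, $MV_rM^{-1}=UV$ hold; by the presentation $\pgl=\langle U,T\,|\,\dots\rangle$ it is equivalent (and computationally cheaper) to check just $MU_rM^{-1}=U$ and $MT_rM^{-1}=TU$, where $T_r:x\mapsto 1+rx$ and $U,T$ are the explicit Möbius maps from Section~2. Writing $M=\begin{bmatrix}a&b\\c&d\end{bmatrix}$, each relation is a projective matrix identity, hence a system of polynomial equations in $a,b,c,d$ and $r$. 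The relation $MU_rM^{-1}=U$ says $M$ simultaneously conjugates the fixed-point pair of $U_r$ to the fixed-point pair of $U$; since $U$ has fixed points $\pm 1$ and $U_r=\Psi_r(U)$ has fixed points $x=\big((r-1)\pm\sqrt{r^2-r+1}\big)/r$ (computed in Section~3, the fixed points of $\mathcal U$), $M$ is determined up to the residual two-dimensional torus normalizing $U$; imposing $MT_rM^{-1}=TU$ then pins down both $M$ and the admissible $r$. Carrying out the elimination, the consistency condition on $r$ should reduce to the equation $r^2-r+1 = -(r)$ up to normalization — more precisely to $r^2+r-1=0$ after accounting for the $-$ sign in $\mathcal T^{-1}\mathcal U\cdots$, whose roots are exactly $-\varphi^2$ and $-\bar\varphi^2$ since $-\varphi^2$ and $-\bar\varphi^2$ satisfy $x^2+3x+1=0$ — I should double-check which quadratic actually appears, but the point is that it is a fixed quadratic whose roots are the two claimed values, and the corresponding $M$ is then read off as the displayed linear fractional map. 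One should verify directly that the two listed pairs $(r,M)$ satisfy all three relations $MU_rM^{-1}=U$, $MK_rM^{-1}=K$, $MV_rM^{-1}=UV$; this is the routine check that I would relegate to a computation.

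Next I would establish $M\circ\psi_{-\bar\varphi^2}=\Jimm_\sharp$ and $M\circ\psi_{-\varphi^2}=\Jimm_\flat$. Here I invoke the uniqueness clause of Lemma~\ref{lem:stabilizer}: both $M\circ\psi_r$ and $\Jimm_\sharp$ (resp. $\Jimm_\flat$) are equivariant maps $\mathsf P^1(\Z)\to\mathsf P^1(\C)$ with respect to the \emph{same} data, once we know the conjugacy $MU_rM^{-1}=U$ etc. Indeed $\psi_r$ is $\Psi_r$-equivariant by construction (Proposition~\ref{thm:MGOconsistent} specialized at $q=r$, which is legitimate since $-\varphi^2,-\bar\varphi^2$ are nonzero and not roots of the excluded polynomials), so $M\circ\psi_r$ satisfies $(M\circ\psi_r)(gx)=MF_r(g)\psi_r(x)=\alpha(g)M\psi_r(x)=\alpha(g)(M\circ\psi_r)(x)$ for $g\in\psl$; that is, $M\circ\psi_r$ is $\alpha$-equivariant for the $\psl$-action, hence satisfies system~(\ref{eq:jimmpsl}). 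By Theorem~\ref{thm:jimmconsistent} there are exactly two such functions, $\Jimm_\sharp$ and $\Jimm_\flat$, distinguished by their value at $1$. So it remains only to compute $(M\circ\psi_r)(1)=M(\psi_r(1))$ and match it against $\varphi^2$ or $\bar\varphi^2$. Recall $\psi_r(1)\in\{1,r\}$ (the two fixed points of $\mathcal A$ from Proposition~\ref{thm:MGOconsistent}); the natural normalization $\psi(1)=1$ gives $\psi_r(1)=1$, and one checks that $M(1)=\varphi^2$ when $r=-\bar\varphi^2$ (with the displayed $M=(x+\varphi)/(-\varphi x+\varphi^2)$, $M(1)=(1+\varphi)/(\varphi^2-\varphi)=\varphi^2/(\varphi^2-\varphi)$; using $\varphi^2=\varphi+1$ this is $(\varphi+1)/1=\varphi+1=\varphi^2$, as needed), and symmetrically $M(1)=\bar\varphi^2$ when $r=-\varphi^2$. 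For items (2) and (3) the same argument applies verbatim: $\psi_r^\pm$ is $\Psi_r^\pm$-equivariant, the conjugator $M^\pm$ in the statement carries $\Psi_r^\pm$ to $\alpha$, hence $M^\pm\circ\psi_r^\pm$ is $\alpha$-equivariant on $\psl$ and is pinned down by its value at $1$; the fixed points of $\mathcal A^\pm$ computed in Section~3 ($\omega^{-1},\frac1{1+\omega^2 r}$ for $+$ and $\omega,\frac1{1-\omega r}$ for $-$) supply $\psi_r^\pm(1)$, and a short computation shows $M^\pm$ sends it to $\varphi^2$ or $\bar\varphi^2$ appropriately.

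The main obstacle I anticipate is the \emph{only if} direction done cleanly — i.e. proving that \emph{no other} value of $r$ admits a conjugating $M$, rather than merely exhibiting $M$ at the two good values. The cleanest route is invariant-theoretic: conjugate representations must have matching trace data, so for each word $w$ in $U,K,V$ the projective trace of $\Psi_r(w)$ (a polynomial in $r$, well-defined up to the square-of-a-unit ambiguity flagged in Table~1) must equal the trace of $\alpha(w)\in\pgl$ (an integer). Picking one or two well-chosen words — for instance $w$ with $\alpha(w)$ of finite order, or the word $A=TSTST^{-1}$ whose $\alpha$-image has fixed points $\varphi^2,\bar\varphi^2$ — already forces $r$ to satisfy a quadratic, and comparing with a second word eliminates the spurious root, leaving exactly $\{-\varphi^2,-\bar\varphi^2\}$. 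Making the square-of-a-unit normalization precise (so that the trace comparison is an honest equation and not merely an equation modulo $\pm r^{\Z}$) is the delicate bookkeeping point; here the results of \cite{jouteur} on palindromicity of traces in $\mathsf{PSL}_2(\Z,q)$, cited just before Proposition~\ref{pmfaithful}, give exactly the rigidity needed to control the ambiguity, and I would lean on that.
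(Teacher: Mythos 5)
Your proposal is correct in substance and, despite the different packaging, amounts to the same computation as the paper's. The paper does not conjugate the generator matrices directly: it writes $f(x)=(a\psi(x)+b)/(c\psi(x)+d)$, uses the $\Psi$-equivariance of $\psi$ under $T$ and $S$ to express $f(1+x)$ and $f(-1/x)$ as Möbius functions of $f(x)$, and then demands the Jimm relations $f(1+x)=1+1/f(x)$, $f(-1/x)=-f(x)$; this produces an explicit polynomial system in $a,b,c,d,q$ whose only solutions are the two displayed pairs $(q,M)$ — so the ``only if'' direction comes for free from solving the system completely, rather than from a separate trace-invariant argument as you propose. Your route (imposing $MU_rM^{-1}=U$, $MT_rM^{-1}=TU$ on the $\pgl$-generators and tracking fixed points of $U_r$) is the same elimination in matrix clothing, and your identification of $M\circ\psi_r$ with $\Jimm_\sharp$ or $\Jimm_\flat$ via the uniqueness clause of Lemma~3.3 together with Theorem~5.4 and the evaluation $M(1)=\varphi^2$ is actually \emph{more} explicit than what the paper writes down, which simply asserts the identity; that part of your argument is a genuine improvement in completeness.

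One concrete slip to fix: you assert that the consistency condition is $r^2+r-1=0$ and that its roots are $-\varphi^2$ and $-\bar\varphi^2$. That is false — the roots of $x^2+x-1$ are $-\varphi$ and $\varphi^{-1}$ — and the correct quadratic, as you note parenthetically, is $r^2+3r+1=0$ (sum $-3$, product $1$). Since you flag the uncertainty and defer the elimination to a computation this is not fatal, but as written the two halves of that sentence contradict each other, and the elimination really must be carried out (as the paper does) to see that the full solution set of the system is exactly the two claimed pairs; otherwise the ``only if'' direction remains unproved. Your fallback trace argument would also work but needs the normalization issue you mention to be handled, which the paper avoids entirely by just exhibiting the complete solution of the polynomial system.
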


\begin{proof}
\begin{enumerate}
\item 
Let $\psi$ be the $\psl$-equivariant quantization map with respect to the representation $\Psi$. So we have 
$$
\psi(1+x)=1+q\psi(x), \quad \psi(-1/x)=-1/q\psi(x)
$$
Now suppose
$$
f(x):=\frac{ a \psi(x)+b}{ c \psi(x)+ d } \iff  
\psi(x)=\frac{ d  f(x)-b}{- c  f(x)+ a }, \quad ( a   d -b c \neq 0)
$$
We want this $f$ to be an equivariant map satisfying 
$f(1+x)=1+1/f(x)$ and $f(-1/x)=-f(x)$ (these are satisfied by Jimm).
One has
\begin{align*}
f(1+x)
&=\frac{ a \psi(x+1)+ b }{ c \psi(x+1)+ d } 
=\frac{ a  q\psi(x)+ b + a }{ c  q\psi(x)+ d + c } =
\frac{ a  q\frac{ d  f(x)- b }{- c  f(x)+ a } + b + a }{ c  q\frac{ d  f(x)- b }{- c  f(x)+ a } + d + c } \\
&=\frac{ a  q({ d  f(x)- b })+({- c  f(x)+ a })( b + a )}
       { c  q({ d  f(x)- b })+({- c  f(x)+ a })( d + c )}\\
&=\frac{( a  q  d - c ( b + a ))f(x)+(- a  q b + a ( b + a ))}
{( c  q  d - c ( d + c ))f(x)+(- c  q b + a ( d + c ))},
\end{align*}
\begin{align}
f(-1/x)
&=\frac{ a -q b \psi(x)}{ c -q d \psi(x)} =
\frac{ a -q b \frac{ d  f(x)- b }{- c  f(x)+ a }}{ c -q d \frac{ d  f(x)- b }{- c  f(x)+ a }}
=\frac{ a (- c  f(x)+ a )-q b ( d  f(x)- b )}{ c (- c  f(x)+ a )-q d ( d  f(x)- b )}\\
&=
\frac
{-( a  c +q b  d ) f(x)+( a ^2+q b ^2)}
{-( c ^2+q d ^2) f(x)+( a  c +q d  b )}
\end{align}
So the equations $f(1+x)=1+1/f(x)$ and $f(-1/x)=-f(x)$ imposes
$$
 a ^2+q b ^2= c ^2+q d ^2=- c  q b + a ( d + c )=0
$$
$$
 a  q  d - c ( b + a )=- a  q b + a ( b + a )=( c  q  d - c ( d + c ))
$$
This system admits the solution
$$
f(x) = \frac{\psi(x)+\varphi}{-\varphi \psi(x)+\varphi^2}
\mbox{ with } q=-\bar\varphi^2
$$
and its conjugate
$$
\bar f(x) = \frac{\psi(x)+\bar\varphi}{-\bar \varphi \psi(x)+\bar\varphi^2}
\mbox{ with } q=-\varphi^2.
$$
It is routine to check that $f$ and $\bar f$ satisfies the other functional equations of $\Jimm$, 
i.e. $f(-x)=-1/f(x)$, $f(1/x)=1/f(x)$ and $f(1-x)=1-f(x)$. 

Note that both $M$'s are in $\pslr$ and can be normalized by dividing with $\sqrt{2}\varphi$ or $\sqrt{2}\bar\varphi$. Also note that both $M$'s  has $\omega$, $\bar\omega$ as their fixed points.
\item The proof is similar to the first case.
\item The proof is similar to the first case.
\end{enumerate}
\end{proof}
Observe that $(-\bar \varphi^2)(\varphi^2)=1$, reflecting the symmetry $q\leftrightarrow   1/q$ discussed in~\cite{genoud}.
This pair of numbers appear in several contexts in the recent paper~\cite{etingof}, too.

For sake of clarity, let us explicitly describe the target sets of the maps $\Psi_r$ discussed above:
\begin{align*}
\mathsf{PSL}_2(\Z, {q=-\bar \varphi^2})=\left\langle 1-\bar \varphi^2x, \quad \frac{1}{\bar\varphi^{2}x} \right \rangle
=\left\langle 1-\frac{1}{x}, \quad \frac{1}{\bar\varphi^{2}x} \right \rangle
<\mathsf{PGL}_2(\Real),\\
\mathsf{PSL}_2(\Z, {q=-\varphi^2})=\left\langle 1-\varphi^2x, \quad \frac{1}{\varphi^{2}x} \right \rangle
=\left\langle 1-\frac{1}{x}, \quad \frac{1}{\varphi^{2}x} \right \rangle
<\mathsf{PGL}_2(\Real).
\end{align*}
Since $r=-\varphi^2,-\bar \varphi^2<0$ we have
$\Psi_r(1+x)= x:\mapsto1+rx \notin \pslr$ and  $\Psi_r(1/x)=x\mapsto-1/rx\notin \pslr$. 
Therefore the images of the representations $\Psi_{-\varphi^2}$
$\Psi_{-\bar \varphi^2}$,  are not contained inside $\pslr$. 
We have the exact sequences (note $\bar\varphi=-1/\varphi$)
\begin{align*}
1\to \Psi_{-\varphi^{\pm2}}(\Gamma) &\to \mathsf{PSL}_2(\Z, {q=-\varphi^{\pm 2}}) \to \langle \pm 1 \rangle\to 1,
\end{align*}
where $\Gamma<\psl$ is the subgroup $\langle L, SLS \rangle$ discussed above, and the surjection is the 
projective determinant.
To see the kernels of the exact sequence above clearly as subgroups of $\pslr$, let us  describe them explicitly in matrix form:
$$
\Psi_{-\varphi^2}(\Gamma)= 
\left\langle 
\begin{bmatrix} 1&-1\\0&1 \end{bmatrix}, \,
\begin{bmatrix} 0&\varphi^{-2}\\ -\varphi^2&1 \end{bmatrix}
\right\rangle,
\quad 
\Psi_{-\bar\varphi^2}(\Gamma)= 
\left\langle 
\begin{bmatrix} 1&-1\\0&1 \end{bmatrix}, \,
\begin{bmatrix} 0&\varphi^{2}\\ -\varphi^{-2}&1 \end{bmatrix}
\right\rangle.
$$
(We can make the groups $\mathsf{PSL}_2(\Z, q=-\varphi^{\pm2})$ act on the upper half plane,
by modifying $\Psi$ via
$\Psi(1+z):=1+q  z^*$, $\Psi(1/z):=-1/q z^*$, where $z^*$ is the complex conjugate of $z$).
A fundamental region for $\mathsf{PSL}_2(\Z, q=-\varphi^{\pm2})$ can be found as the pull-back by $M$ of the fundamental region of $\langle L, SLS \rangle <\psl$.

Note that there do exist $\alpha$-equivariant meromorphic 
functions on the upper half plane with respect to the $\Gamma$-action
\cite{conumerator}. The Schwarzian of an equivariant function is weight-4 modular form.

\newpage
\thispagestyle{empty}
\begin{table}[H]
\centering
\begin{tabular}{c c c c}
$r=2$ & \fbox{\includegraphics[width=0.3\linewidth]{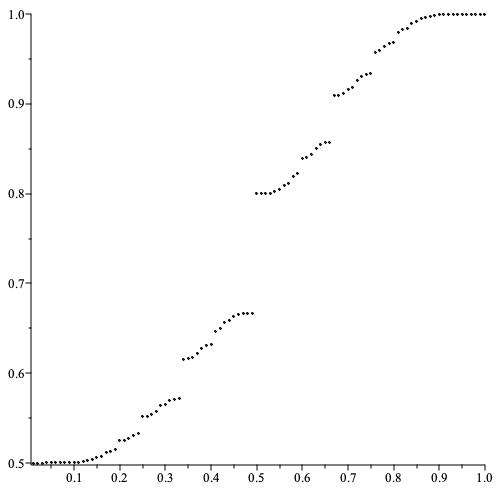}} & \fbox{\includegraphics[width=0.3\linewidth]{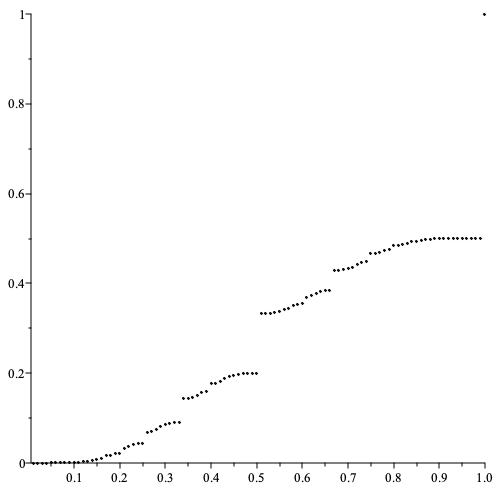}} & $r=1/2$ \\[10pt]
$r=3/2$ & \fbox{\includegraphics[width=0.3\linewidth]{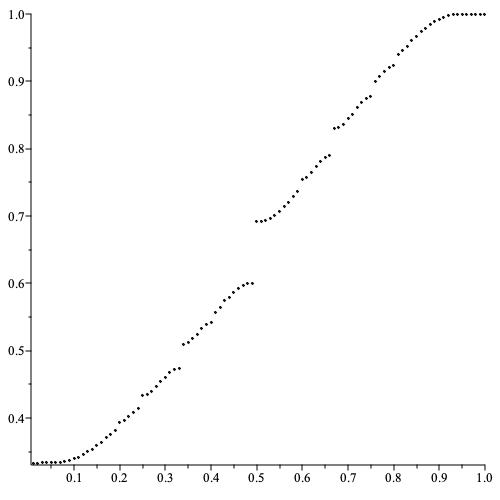}} & \fbox{\includegraphics[width=0.3\linewidth]{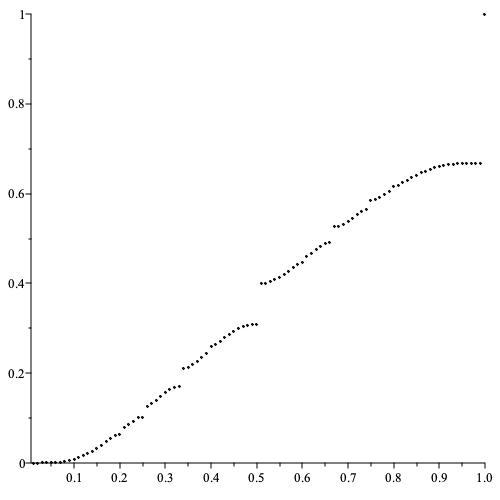}} & $r=2/3$ \\[10pt]
$r=4/3$  & \fbox{\includegraphics[width=0.3\linewidth]{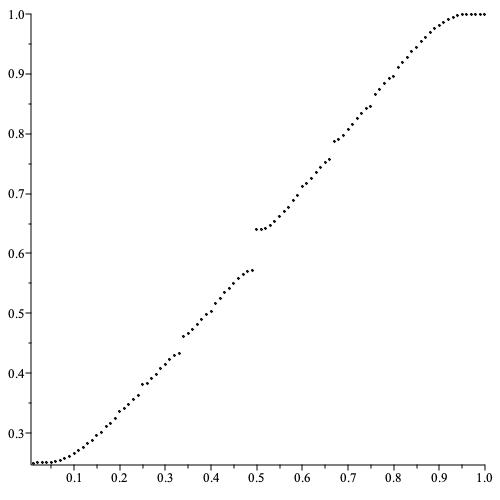}} & \fbox{\includegraphics[width=0.3\linewidth]{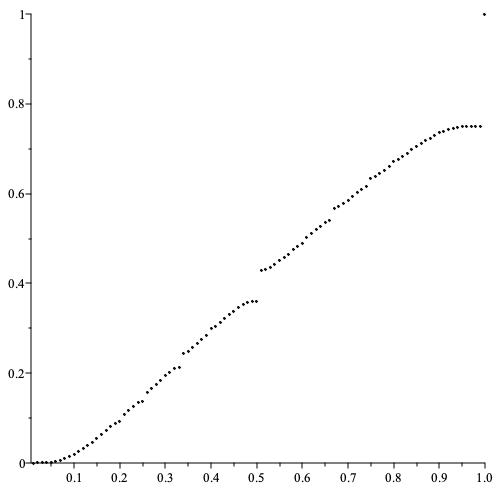}} & $r=3/4$ \\[10pt]
$r=5/4$  & \fbox{\includegraphics[width=0.3\linewidth]{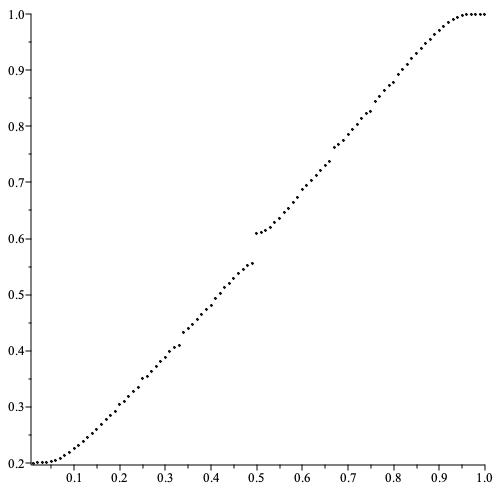}} & \fbox{\includegraphics[width=0.3\linewidth]{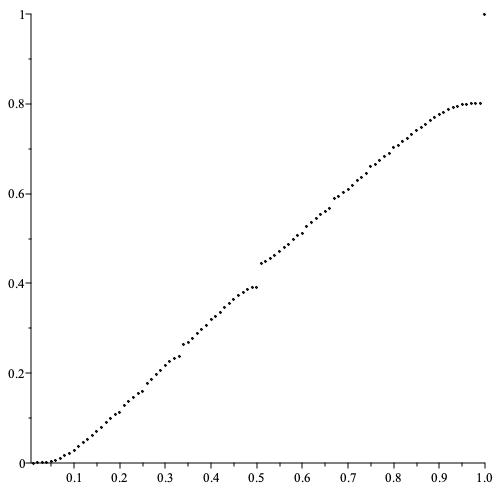}} & $r=4/5$ \\[10pt]
\end{tabular}
\caption{Plots of $\psi_r$ for some positive real values of $r$.}
\label{tab:roots_figures}
\end{table}

\newpage
\thispagestyle{empty}
\begin{table}[H]
\centering
\begin{tabular}{c c c c}
$r=-2$ & \fbox{\includegraphics[width=0.3\linewidth]{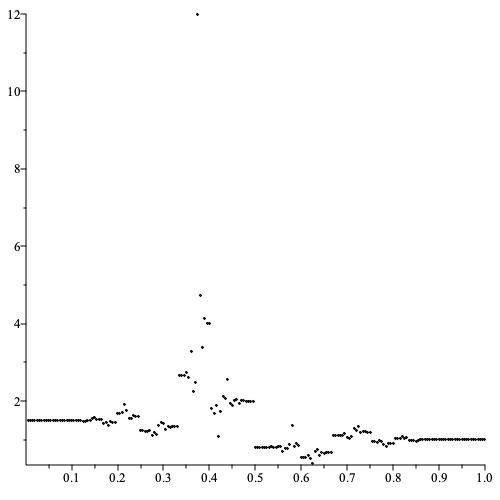}} & \fbox{\includegraphics[width=0.3\linewidth]{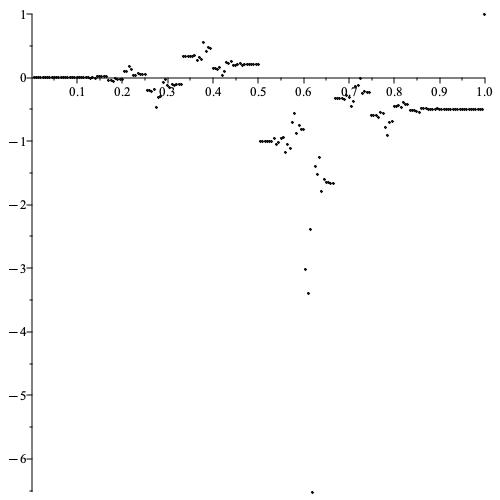}} & $r=-1/2$ \\[10pt]
$r=-3$ & \fbox{\includegraphics[width=0.3\linewidth]{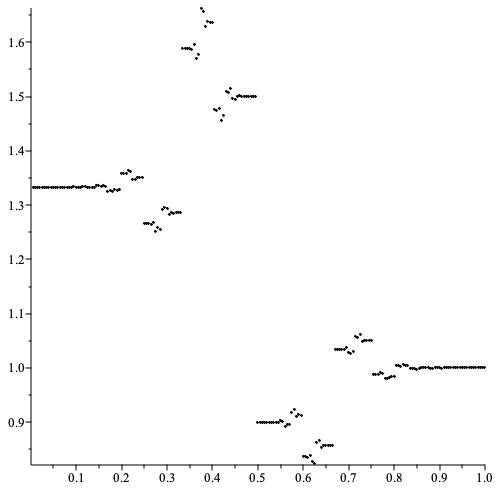}} & \fbox{\includegraphics[width=0.3\linewidth]{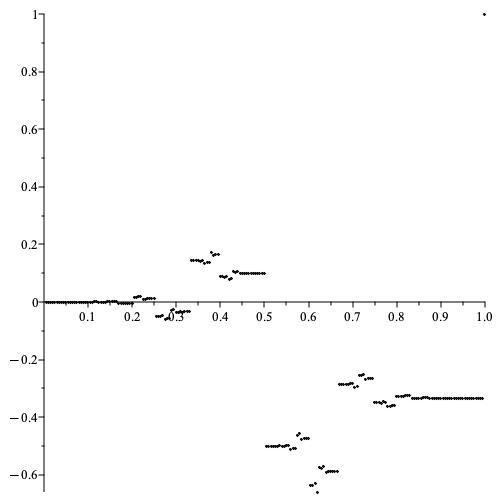}} & $r=1/3$ \\[10pt]
$r=-4$  & \fbox{\includegraphics[width=0.3\linewidth]{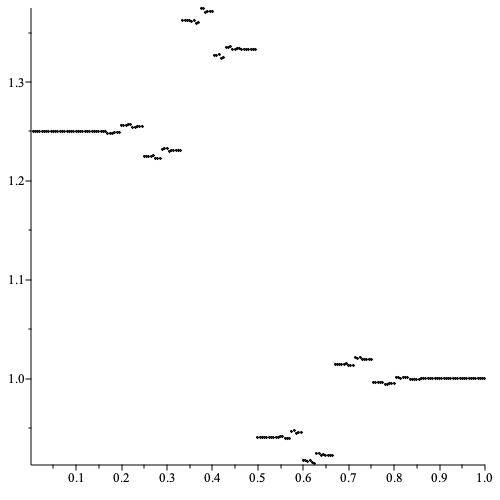}} & \fbox{\includegraphics[width=0.3\linewidth]{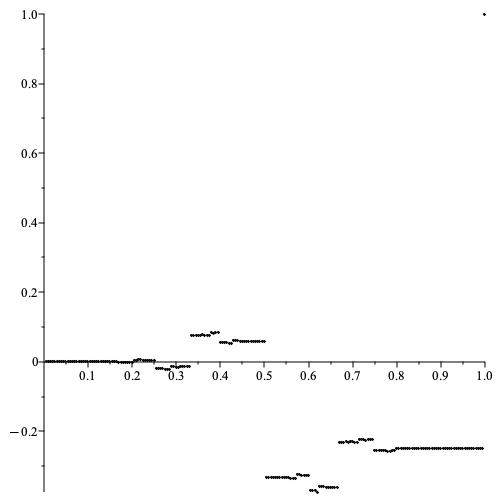}} & $r=-1/4$ \\[10pt]
$r=-5$  & \fbox{\includegraphics[width=0.3\linewidth]{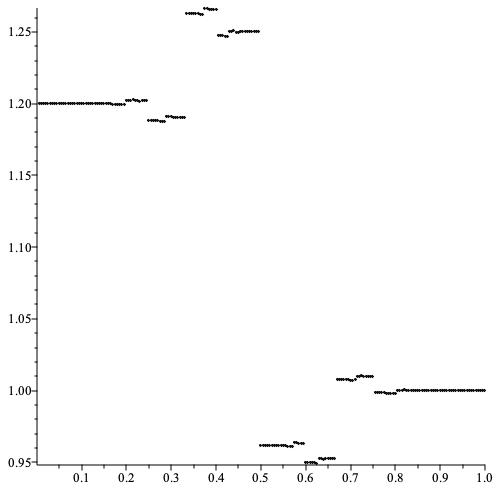}} & \fbox{\includegraphics[width=0.3\linewidth]{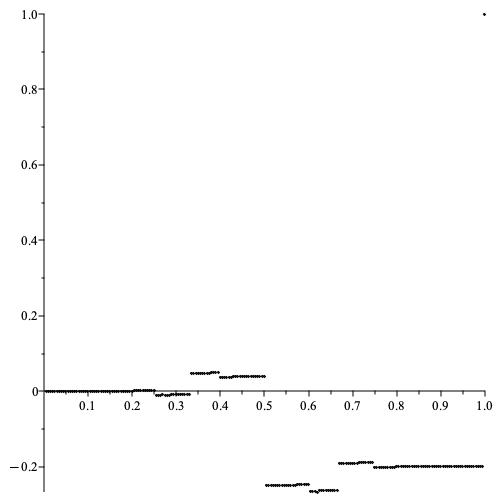}} & $r=-1/5$ \\[10pt]
\end{tabular}
\caption{Plots of $\psi_r$ for some negative real values of $r$.}
\label{tab:roots_figures2}
\end{table}

\newpage
\thispagestyle{empty}
\begin{figure}[H]
\centering
\fbox{\includegraphics[width=0.35\linewidth]{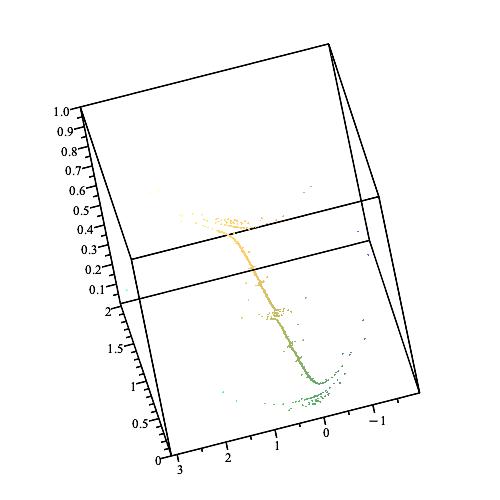}}
\caption{Plot of $\Psi_r$ at $r=\exp\left(\frac{2\pi i}{17}\right)$}
\end{figure}

\thispagestyle{empty}
\begin{figure}[H]
\centering
\fbox{\includegraphics[width=0.35\linewidth]{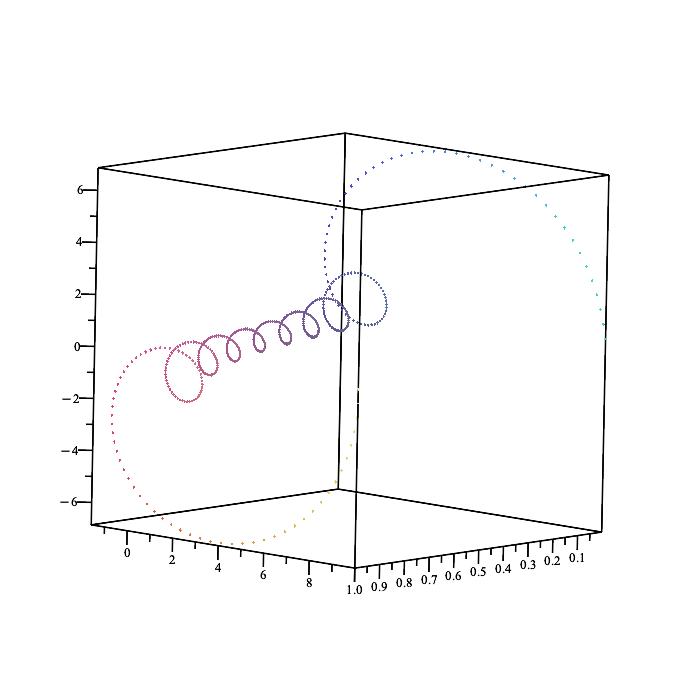}}
\caption{Plot of $\psi_r(10)$ with $x=10$ fixed while $r$ traces the unit circle.  }
\end{figure}

\begin{figure}[H]
\centering
\fbox{\includegraphics[width=0.35\linewidth]{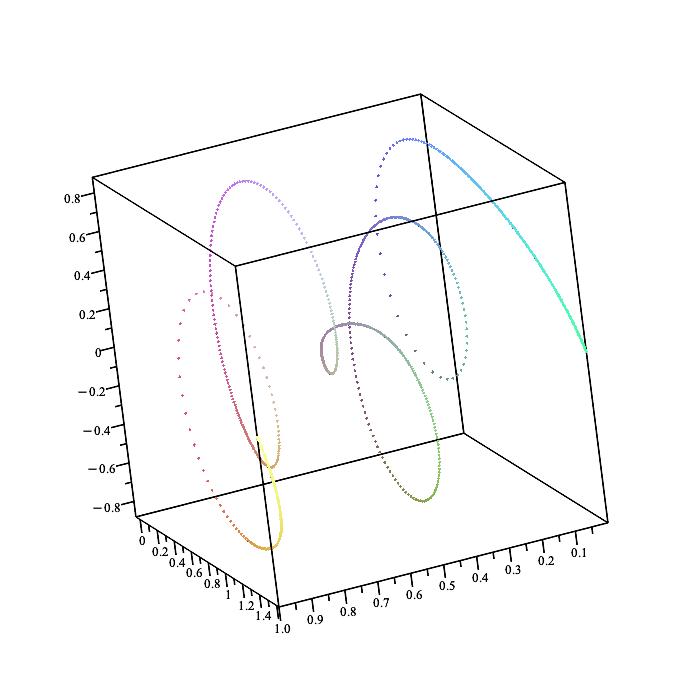}}
\caption{Plot of $\psi_r([1,1,1,1,1,1])$ with $x=[1,1,1,1,1,1]$ fixed while $r$ traces the unit circle. }
\end{figure}


\begin{thebibliography}{}
%
%
\bibitem{genoud} 
S. Morier-Genoud and V. Ovsienko. 
{\it $ q $-deformed rationals and irrationals.}
arXiv preprint arXiv:2503.23834 (2025).


\bibitem{Valentin} 
S. Morier Genoud and V. Ovsienko.
{\it On q-deformed real numbers.} 
Experimental Mathematics  2022, Vol. 31, No. 2, 652–660.

\bibitem{etingof}
P. Etingof, 
{\it On $ q $-real and $ q $-complex numbers.}
 arXiv preprint arXiv:2508.08440 (2025).
 
\bibitem{jouteur}
P. Jouteur.
{\it Symmetries of the q-deformed real projective line.} 
arXiv preprint arXiv:2503.02122 (2025).


\bibitem{conumerator}
A. M. Uluda\u{g} and B. Eren G\"{o}kmen.
{\it The Conumerator and the Codenominator.} 
Bulletin des Sciences Mathématiques,
Volume 180, November 2022, 103192.

  
  
\bibitem{Uludag}
A. M. Uluda\u{g} and  H. Ayral. 
{\it On the Involution Jimm.} in: 
IRMA Lectures in Mathematics and Theoretical Physics, 
2021, Vol. 33, pp. 561-578.

\bibitem{uludagg}
A. M. Uludağ,{\it On the involution Jimm.}
Topology and geometry–a collection of essays dedicated to Vladimir G. Turaev: 561-578.


\bibitem{zeytin1}
A. M. Uludağ, A. Zeytin and M. Durmuş.
{\it Binary quadratic forms as dessins.}
Journal de théorie des nombres de Bordeaux 29.2 (2017): 445-469.

\bibitem{zeytin2}
A. M. Uludağ and A. Zeytin.
{\it A panaroma of the fundamental group of the modular orbifold.}
Handbook of Teichmüller theory 6 (2016): 501-519.





\end{thebibliography}
\end{document}